\numberwithin{equation}{section}
\newtheorem{theorem}{Theorem}[section]
\newtheorem{lemma}[theorem]{Lemma}
\newtheorem{main}{Theorem}
\newtheorem*{main*}{Theorem}
\theoremstyle{remark}
\newtheorem{remark}[equation]{Remark}
\newtheorem*{remark*}{Remark}
\newcommand{\Ric}{\mathop{\mathrm{Ric}}\nolimits}
\newcommand{\Ad}{\mathop{\mathrm{Ad}}\nolimits}
\newcommand{\ecal}{\mathcal{E}}
\newcommand{\mcal}{\mathcal{M}}
\def\calM{\mcal}  
  \def\calE{\ecal}
\def\a{\alpha}
\def\max{{\operatorname{max}}}
\def\tr{\hbox{\rm tr}}
\def\Ad{\hbox{\rm Ad}}
\def\K{K\"ahler } \def\Kno{K\"ahler}
\def\KE{K\"ahler--Einstein }
\def\Ric{\hbox{\rm Ric}\,}
\def\h#1{\hbox{#1}}
\def\strutdepth{\dp\strutbox}
\def\specialstar{\vtop to \strutdepth{
    \baselineskip\strutdepth
    \vss\llap{$\star$\ \ \ \ \ \ \ \ \  }\null}}
\def\marginalstar{\strut\vadjust{\kern-\strutdepth\specialstar}}
\def\marginal#1{\strut\vadjust{\kern-\strutdepth
    {\vtop to \strutdepth{
    \baselineskip\strutdepth
    \vss\llap{{ \small #1 }}\null}
    }}
    }
\def\text{\textstyle}
\def\q{\quad} \def\qq{\qquad}
\def\ra{\rightarrow}
\newcommand{\PP}{{\mathbb P}} 
 \newcommand{\CC}{{\mathbb C}}
 \newcommand{\NN}{{\mathbb N}}
\def\la{\lambda}
\def\beq{\begin{equation}}
\def\eeq{\end{equation}}
\def\bpf{\begin{proof}}
\def\epf{\end{proof}}
\def\eaeq{\end{aligned}}
\def\baeq{\begin{aligned}}
\def\mf{\mathfrak}
\def\SO{\mathrm{SO}}
\def\SU{\mathrm{SU}}
\def\Sp{\mathrm{Sp}}
\def\U{\mathrm{U}}
\def\G2{\mathrm{G}_2}
\newcommand{\Sph}{\mathbb{S}}
\newcommand{\CP}{\mathbb{C\mkern1mu P}}
\newcommand{\HP}{\mathbb{H\mkern1mu P}}
\begin{document}

\title{On the Ricci iteration for homogeneous metrics on spheres and projective spaces}

\author{Timothy Buttsworth}
\address{The University of Queensland} 
\email{timothy.buttsworth@uq.net.au}
\author{Artem Pulemotov}
\address{The University of Queensland} 
\email{a.pulemotov@uq.edu.au}
\author{Yanir A. Rubinstein}
\address{University of Maryland} 
\email{yanir@umd.edu}
\author{Wolfgang Ziller}
\address{University of Pennsylvania} 
\email{wziller@math.upenn.edu}

\renewcommand\rightmark{}
\renewcommand\leftmark{}


\begin{abstract} We study the Ricci iteration for homogeneous metrics on spheres and complex projective spaces. Such metrics can be described in terms of modifying the canonical metric on the fibers of a Hopf fibration. When the fibers of the Hopf fibration are circles or spheres of dimension 2 or~7, we observe that the Ricci iteration as well as all ancient Ricci iterations can be completely described using known results. The remaining and most challenging case is when the fibers are spheres of dimension~3. On the 3-sphere itself, using a result of Hamilton on the prescribed Ricci curvature equation, we establish existence and convergence of the Ricci iteration and confirm in this setting a conjecture on the relationship between ancient Ricci iterations and ancient solutions to the Ricci flow. In higher dimensions we obtain sufficient conditions for the solvability of the prescribed Ricci curvature equation as well as partial results on the behavior of the Ricci iteration.
\end{abstract}

\maketitle

\def\lb{\label}


\section{Introduction and main results}
\label{IntroSec}

Let $(M,g_1)$ be a smooth Riemannian manifold.
A Ricci iteration is a sequence of metrics $g_i$ on $M$ satisfying
\begin{align*}
\label{RIEq}
\Ric g_{i+1}=g_i,\quad  i\in\NN,
\end{align*}
where $\Ric g_{i+1}$ denotes the Ricci curvature of $g_{i+1}$.
This concept was introduced by 
the third-named author \cite{R07,R08} as a discretization of the Ricci flow; see the survey~\cite[\S6.5]{R14} and references therein. In order to study a Ricci iteration, first one has to  understand when the   prescribed Ricci curvature equation has a solution. This is an old subject;
see, e.g., \cite[Chapter~5]{AB87},~\cite{TB16,MGAP17} and  references therein.

If $(M,g_1)$ is \Kno, a Ricci iteration exists if and only if
a positive multiple of $g_1$ represents the first Chern class, and the iteration
then converges modulo diffeomorphisms to a \KE metric
whenever one
exists~\cite{DR}.
In the non-\K case the Ricci iteration was studied only recently 
for a limited class of
homogeneous spaces \cite{PR}.
The purpose of this article is to add to these results in the non-\K case
by studying the Ricci iteration
for homogeneous metrics on Hopf
fibrations.
Indeed, such spaces have up to four
summands in their isotropy representations with possibly equivalent
isotropy summands, while most of the analysis of \cite{PR} pertains
to two inequivalent summands.

When studying the Ricci flow, following Hamilton \cite[\S19]{Ham1995}, it is important to understand the behavior of ancient solutions, i.e., solutions that can be extended indefinitely backward in time. These solutions are the prototype for singularity models for the Ricci flow and have been crucial, for example, in Perelman's work
\cite{Perelman}.
In our recent work, two of us proposed the
following discrete analogue of an ancient Ricci flow \cite[\S1]{PR}: an ancient Ricci iteration
is a sequence of Riemannian metrics
$g_i$ on $M$ such that
\begin{equation*}
\begin{aligned}
\label{RRIEq}
g_{i-1}=\Ric g_{i}, \qq i=1,0,-1,-2,\ldots.
\end{aligned}
\end{equation*}
The idea is that
ancient iterations should ``detect" ancient flows, i.e.,
the latter should exist if the former exist
\cite[Conjecture 2.5]{PR}.
It is rare that a Ricci iteration exists since the
prescribed curvature problem can often be
obstructed. It seems even rarer that
an ancient Ricci iteration exists since $\Ric\cdots\Ric g_{1}$ must
always be positive definite. If this is the case, we say that $g_1$ admits an ancient Ricci iteration. A basic question then is to study convergence and the behavior of the limit
since by the aforementioned conjecture this should
be helpful for studying ancient Ricci flows.

In this article we verify this conjecture for all homogeneous spheres and complex projective spaces. In the case of homogeneous metrics on $\Sph^3$, the following result completely describes both Ricci iterations and ancient Ricci iterations, and in particular confirms~\cite[Conjecture 2.5]{PR} in this setting as well. Notice that, given $T$, we can only hope to solve $\Ric g=cT$ up to a unique constant~$c$ (see Theorem~\ref{SO3PRC}). Thus we say that $g_i$ is a Ricci iteration starting from $cg_0$ if $\Ric g_1=cg_0$ and $\Ric g_{i+1}=g_i$ for $i\ge1$.

\begin{main}
\label{Main} Let $g_0$ be a left-invariant metric on $\SU(2)$.
\begin{itemize}
\item[(a)]
There exists a unique
Ricci iteration starting from $cg_0$ for some~$c>0$, and it converges  to a round metric.
\item[(b)] The only left-invariant metrics  which admit an ancient Ricci iteration are the Berger metrics $g^\lambda$ with $\lambda\in (0,1]$.
Unless $\la=1$, the sequence $g_i$ with $g_1=g^\lambda$ collapses
in the Gromov--Hausdorff topology, as $i\to - \infty$, to a round metric on $\Sph^2$ by shrinking the length of the Hopf fibers.
\end{itemize}
\end{main}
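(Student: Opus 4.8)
The plan is to reduce the whole statement to the dynamics of one explicit self-map of a $2$-simplex. Fix a Milnor frame $e_1,e_2,e_3$ of $\fsu(2)$ with $[e_i,e_{i+1}]=2e_{i+2}$; then a left-invariant metric is $g=\diag(a_1,a_2,a_3)$ and Milnor's formulas give $\Ric g=\diag(r_1,r_2,r_3)$ with $r_i=2\big(a_i^2-(a_j-a_k)^2\big)/(a_ja_k)$, so $\Ric g$ is positive definite exactly when $(a_1,a_2,a_3)$ satisfies the three triangle inequalities. On the space $\widehat{\mathcal M}$ of left-invariant metrics modulo scaling --- an open $2$-simplex --- the Ricci-positive locus $\mathcal U$ is the open medial subtriangle, the round metric is its barycenter, and by Theorem~\ref{SO3PRC} the induced Ricci shape map $R\colon\mathcal U\to\widehat{\mathcal M}$ is a diffeomorphism; hence $R^{-1}$ is a single-valued self-map of all of $\widehat{\mathcal M}$. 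I would also introduce the scale-invariant quantity $N([g])=p_1p_2p_3/(a_1a_2a_3)$, where $p_i:=a_j+a_k-a_i$; from $a_i=\tfrac12(p_j+p_k)\ge\sqrt{p_jp_k}$ one gets $0<N\le 1$ on $\mathcal U$, with $N=1$ only at the round point and $N\to 0$ towards $\partial\mathcal U$. Along the Berger slice $g^\lambda=\diag(\lambda,1,1)$ everything is explicit: $\Ric g^\lambda=\diag(2\lambda^2,\,4-2\lambda,\,4-2\lambda)$, the shape map is $\lambda\mapsto\psi(\lambda):=\lambda^2/(2-\lambda)$ on $(0,2)$, $N(g^\lambda)=\lambda(2-\lambda)$, and $N(g^\lambda)-N(R[g^\lambda])=8\lambda(1-\lambda)^2/(2-\lambda)^2\ge 0$.

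The technical core is a Lyapunov lemma: $N(R[g])\le N([g])$ for every $[g]\in\mathcal U$, with equality only at the round point. The Berger computation above is the model case; in general I expect that, after clearing denominators, the inequality becomes a symmetric polynomial in $p_1,p_2,p_3>0$ that vanishes to second order along $\{p_1=p_2=p_3\}$ and is otherwise strictly positive, and this is the one place I anticipate genuine computation. Granting the lemma, part~(a) is nearly formal. The equation $\Ric g_{i+1}=g_i$ determines $g_{i+1}$ as the unique rescaling of the Hamilton solution for the tensor $g_i$ that lies in the image of $\Ric$; scale-invariance of $\Ric$ makes this rescaling consistent with $\Ric g_{i+1}=g_i$ exactly, so the iteration exists and is unique, and $\Ric g_{i+1}=g_i>0$ keeps the whole sequence in $\mathcal U$, with $[g_i]=R^{-i}[g_0]$. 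Then $N([g_i])$ is nondecreasing and bounded by $1$; since $\overline{\mathcal U}$ is compact in $\widehat{\mathcal M}$ and $R^{-1}$ is continuous, any subsequential limit $[h]$ of the sequence $[g_i]$ satisfies $N(R^{-1}[h])=N([h])$, so by the equality case (applied to $R^{-1}[h]\in\mathcal U$) the point $[h]$ is the round point. Hence $[g_i]$ converges to the class of the round metric, and tracking scales (the only round metric in the image of $\Ric$ being the one with eigenvalues all equal to $2$) shows $g_i$ converges to the round metric fixed by the iteration.

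For part~(b), $g_1$ admits an ancient Ricci iteration precisely when $R^m[g_1]\in\mathcal U$ for all $m\ge 0$, since then the terms $g_{1-m}$ (defined by $g_{i-1}=\Ric g_i$) are forced. Along the Berger slice $\psi$ fixes $\lambda=1$ with $\psi'(1)=3$, maps $(0,1)$ into itself with orbits decreasing to $0$, and carries any $\lambda\in(1,2)$ out of $(0,2)$ after finitely many steps; so among Berger metrics exactly $g^\lambda$ with $\lambda\in(0,1]$ admit an ancient iteration, and for $\lambda\in(0,1)$ the formula $\Ric g^\mu=\diag(2\mu^2,4-2\mu,4-2\mu)$ shows $g_{1-m}$ is the Berger metric with eigenvalues $(2\mu_{m-1}^2,\,4-2\mu_{m-1},\,4-2\mu_{m-1})$, where $\mu_k=\psi^k(\lambda)\to 0$; its base eigenvalue tends to $4$ and its fibre eigenvalue to $0$, so $g_{1-m}$ collapses, as $m\to\infty$, in the Gromov--Hausdorff sense to a fixed round $\Sph^2$ by shrinking the Hopf fibre. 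To rule out everything else, suppose $[g]$ is not the round point and $R^m[g]\in\mathcal U$ for all $m$; by the Lyapunov lemma $N(R^m[g])\searrow 0$, so the orbit tends to $\partial\mathcal U$. Now one uses the boundary behaviour of $R$: since $p_i=0$ forces $r_j=r_k=0$, each open edge of $\partial\mathcal U$ is collapsed by $R$ to a corner of the simplex $\widehat{\mathcal M}$ lying outside $\overline{\mathcal U}$, so a point close to $\partial\mathcal U$ but bounded away from the corners of $\mathcal U$ is sent out of $\overline{\mathcal U}$; hence the orbit can accumulate only at the corners of $\mathcal U$, which are the $\lambda\to 0$ ends of the three Berger arcs (all equivalent under the frame symmetry). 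Near such a corner $R$ is hyperbolic --- superattracting along the Berger arc since $\psi(\lambda)\sim\lambda^2/2$, and expanding transversally, as the same computation in $p$-coordinates shows --- so the points whose forward orbit stays in $\mathcal U$ near the corner are exactly those on the local Berger arc; a tail of the orbit of $[g]$ therefore lies on a Berger arc, and since the Berger arcs are invariant under both $R$ and the single-valued $R^{-1}$, so is $[g]$. With the Berger-slice analysis this proves~(b). (To avoid the local hyperbolic argument one may instead package the transversal expansion as a second scale-invariant function, increasing under $R$ and vanishing exactly on the Berger arcs, after which the conclusion follows by monotonicity together with the boundary behaviour of $R$.) The two real obstacles are thus the polynomial inequality behind the Lyapunov function and the mildly singular analysis of $R$ along $\partial\mathcal U$ near the corners; everything else is bookkeeping with Milnor's formulas.
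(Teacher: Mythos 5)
Your approach is genuinely different from the paper's and is an interesting idea, but as written it has two real gaps that you yourself flag, and I want to make clear that they are not routine bookkeeping. Your central tool is the scale-invariant Lyapunov function $N([g])=p_1p_2p_3/(a_1a_2a_3)$ with $p_i=a_j+a_k-a_i$, together with the claim $N(R[g])\le N([g])$ on $\mathcal U$ with equality only at the round point. Working this out in $p$-coordinates, using $a_i=(p_j+p_k)/2$ and $r_i=2p_jp_k/(a_ja_k)$, the claim reduces to the polynomial inequality
\begin{equation*}
Q_1Q_2Q_3\;\le\;8\,(p_1p_2p_3)^3,\qquad
Q_i:=p_i^2(p_j+p_k)+p_i(p_j^2+p_k^2)-p_jp_k(p_j+p_k),
\end{equation*}
for $p_i>0$ with all $Q_i>0$, with equality iff $p_1=p_2=p_3$. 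This holds on the Berger slice $p=(2-\lambda,\lambda,\lambda)$ (giving exactly your $8\lambda(1-\lambda)^2/(2-\lambda)^2$), and it holds on a few other test slices I tried, but I do not see a proof, and neither do you supply one. Without it, neither the convergence in part~(a) nor the dichotomy in part~(b) follows. The second gap is the local analysis at the corners of $\mathcal U$: you need that any point whose forward $R$-orbit stays in $\mathcal U$ and accumulates at a corner must lie on the Berger arc. Near the corner $R$ is not a diffeomorphism --- the Berger direction is superattracting and the transversal direction is highly unstable (to leading order the transversal Ricci eigenvalues $q_2,q_3$ are opposite in sign, so generic nearby points leave $\overline{\mathcal U}$ in one step). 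This is consistent with your claim, but turning ``superattracting/expanding'' into a statement that the local stable set is exactly the Berger arc requires a careful normal-form or cone-field argument at a degenerate fixed point; it is not a consequence of the ordinary stable manifold theorem. You again acknowledge this, but it is a second genuine obstacle, not a calculation.

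By contrast, the paper sidesteps both issues by tracking the three scale-invariant ratios $\alpha_{kl}^{(i)}=x_k^{(i)}/x_l^{(i)}$. The key Lemma~\ref{lem_monot} is a one-line consequence of the identity
$r_i/r_j=\dfrac{x_{\{i,j\}^c}+(x_i-x_j)}{x_{\{i,j\}^c}-(x_i-x_j)}\cdot\dfrac{x_i}{x_j}$,
giving monotonicity and boundedness of each ratio. Passing to the limit in the recursion \eqref{alphaklEq} then forces $\alpha_{kl}=1$ and hence $x_k^{(i)}\to 2$ for part~(a), and for part~(b) the paper's Lemma~\ref{lastLemma} rules out non-Berger ancient iterations using only the elementary bound $\alpha_{32}^{(i)}<\alpha_{12}^{(i)}+1$ (a reformulation of $p_1^{(i)}>0$) together with the same limiting argument --- no boundary dynamics or global Lyapunov inequality is needed. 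If you can prove the displayed polynomial inequality and make the corner analysis rigorous, your argument would go through and would be a nice alternative with a cleaner conceptual picture of the simplex dynamics, but as it stands those two steps are the proof, and they are missing.
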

Recall that we have the Hopf fibration $\Sph^1 \xhookrightarrow{}  \Sph^3(1)\to \Sph^2(1/2)$ and that a Berger metric $g^\lambda$ 
is obtained by changing the length of the fibers to be equal to $2\pi\lambda$.
During the collapse the length  of the fibers goes monotonically to $0$, and hence the metric converges (in the Gromov--Hausdorff topology) to a metric on the base. We point out that one has the same behavior for ancient solutions of the Ricci flow of left-invariant metrics on $\Sph^3$ \cite{BKN}.

Homogeneous metrics on spheres and complex projective spaces were 
classified in \cite{WZ82}. Apart from the round sphere
and the Fubini--Study metric on complex projective spaces
in any real/complex dimension, these metrics can be described geometrically in terms of the Hopf fibrations:
\begin{align*}
\Sph^1 \xhookrightarrow{} \Sph^{2n+1}\to \CP^n,\qquad
\Sph^3&\xhookrightarrow{} \Sph^{4n+3}\to \HP^n,\qquad \Sph^7\xhookrightarrow{} \Sph^{15}\to\Sph^8,\\
\CP^1&\xhookrightarrow{} \CP^{2n+1}\to \HP^{n}.
\end{align*}
By scaling the canonical metric on the total space by a
constant $t$ in the direction of the fibers, and a constant $s$ perpendicular to it, we obtain the homogeneous metrics~$g_{t,s}$.
The only remaining homogeneous metrics are given  by
the family $g_{(t_1,t_2,t_3,s)}$ on $\Sph^{4n+3}$ where we modify the
round sphere metric with an arbitrary left invariant metric on the
3-sphere fiber. Up to isometry, one can assume the metric is
diagonal with respect to a fixed basis and $t_i$ are the lengths
squared of the basis vectors. For the metrics $g_{t,s}$ on $\Sph^{2n+1}$ and $\Sph^{15}$, as well as for the metrics
$g_{(t,t,t,s)}$ on $\Sph^{4n+3}$, the isotropy representation consists of two 
inequivalent irreducible
summands, a situation that was studied in detail in \cite{PR}  where Ricci iterations and ancient Ricci iterations were classified. We summarize the application to the metrics $g_{t,s}$ in Section~\ref{sec_12sum}.

The situation for the metrics $g_{(t_1,t_2,t_3,s)}$, which can be regarded as a generalization of the metrics in Theorem A, is more complicated. For the prescribed Ricci curvature problem we have the following sufficient condition.

\begin{main}\label{main2}
Assume that the homogeneous metric $T=g_{(a_1,a_2,a_3,b)}$ satisfies
$$
\frac b{a_i}< 2n+4,\quad i=1,2,3.
$$
Then there exists a homogeneous metric $g$  such that $\Ric  g=cT$ for some $c>0$.
\end{main}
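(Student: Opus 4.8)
The plan is to produce the metric $g$ by a variational argument. Recall (cf.\ \cite{PR}) that if the scalar curvature functional $S$ attains its supremum over the set $\mathcal{M}_T$ of homogeneous metrics $g$ with $\operatorname{tr}_g T=1$, at $g^{*}$ say, then $g^{*}$ is a critical point of $S|_{\mathcal{M}_T}$, whence $\Ric g^{*}=cT$ with $c=S(g^{*})$ (take the $g^{*}$-trace of both sides); moreover $\sup_{\mathcal{M}_T}S>0$, since a round (or near-round) metric rescaled into $\mathcal{M}_T$ has positive scalar curvature, so that in fact $c>0$. Thus it suffices to prove that $S|_{\mathcal{M}_T}$ attains its supremum. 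As explained in the introduction we may, up to isometry, restrict to the diagonal metrics $g=g_{(t_1,t_2,t_3,s)}$; and since the isometries acting by a sign change on two of the three fiber directions fix $T$, a maximizer among diagonal metrics is automatically a critical point of $S$ on all of $\mathcal{M}_T$.

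First I would record the known formula for the scalar curvature of $g_{(t_1,t_2,t_3,s)}$; its structurally relevant shape, obtained from the general homogeneous Ricci formula or, via O'Neill, from the Riemannian submersion $\Sph^{4n+3}\to\HP^{n}$ with $3$-sphere fibers, is
\[
S(t_1,t_2,t_3,s)=S_{\Sph^3}(t_1,t_2,t_3)+\frac{A(n)}{s}-B(n)\sum_{i=1}^{3}\frac{t_i}{s^{2}},\qquad A(n),\,B(n)>0,
\]
where $S_{\Sph^3}(t_1,t_2,t_3)=\kappa\,(t_1^{-1}+t_2^{-1}+t_3^{-1})-\ell\,\big(\tfrac{t_1}{t_2t_3}+\tfrac{t_2}{t_1t_3}+\tfrac{t_3}{t_1t_2}\big)$, with $\kappa=2\ell>0$, is the scalar curvature of the left-invariant metric $(t_1,t_2,t_3)$ on $\Sph^3$. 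Substituting $u_i:=a_i/t_i$ and $v:=4nb/s$, the constraint $\operatorname{tr}_g T=1$ becomes $u_1+u_2+u_3+v=1$, so $\mathcal{M}_T$ is identified with the open $3$-simplex $\Delta$ and $S$ with a smooth function $\widehat{S}$ on it. On $\mathcal{M}_T$ one has $t_i\ge a_i$ and $s\ge 4nb$, so the positive terms of $S$ — the only ones — are bounded, and hence $\sup_\Delta\widehat{S}<\infty$.

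Next I would analyze $\widehat{S}$ near $\partial\Delta$. If some $u_i\to0$ while $u_j,u_k$ stay bounded away from $0$, then $-\ell\,t_i/(t_jt_k)\to-\infty$; if some $t_i\to\infty$ with $s$ bounded, then $-B(n)t_i/s^{2}\to-\infty$; and a short further inspection of the displayed formula (using $\kappa=2\ell$) shows that along any sequence leaving every compact subset of $\Delta$ one has $\limsup\widehat{S}\le M_\partial$, where
\[
M_\partial:=\sup\Big\{\,S_{\Sph^3}(t_1,t_2,t_3)\;:\;\tfrac{a_1}{t_1}+\tfrac{a_2}{t_2}+\tfrac{a_3}{t_3}=1\,\Big\}>0,
\]
equality being possible only along sequences with $v\to0$ and $(t_1,t_2,t_3)$ convergent (the $\HP^{n}$ factor blows up and the fiber stabilizes). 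Here $M_\partial$ is finite and attained at an interior point of its constraint set, since $S_{\Sph^3}\to-\infty$ as any $t_i\to\infty$; it is realized by the prescribed-Ricci solution on $\Sph^3$ from Theorem~\ref{SO3PRC}. Consequently, once we know
\[
\sup_\Delta\widehat{S}>M_\partial,
\]
every maximizing sequence stays in a compact subset of $\Delta$, the supremum is attained at an interior critical point $g^{*}$, and the proposition follows from the first paragraph.

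It remains to derive $\sup_\Delta\widehat{S}>M_\partial$ from the hypothesis. Let $(\tau_1,\tau_2,\tau_3)$ realize $M_\partial$ and, for $\mu>1$, put $g_\mu:=g_{(\mu\tau_1,\,\mu\tau_2,\,\mu\tau_3,\,4nb\mu/(\mu-1))}\in\mathcal{M}_T$. Using that $S_{\Sph^3}$ is homogeneous of degree $-1$ and that the Lagrange multiplier of $S_{\Sph^3}$ on its constraint equals $M_\partial$ (by Euler's relation, since $\sum_i a_i/\tau_i=1$), one computes that the right derivative of $\mu\mapsto S(g_\mu)$ at $\mu=1$ equals $A(n)/(4nb)-M_\partial$; hence $\sup_\Delta\widehat{S}>M_\partial$ as soon as $A(n)/(4nb)>M_\partial$. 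Finally, optimizing $S_{\Sph^3}$ over its constraint gives the bound $M_\partial\le \kappa/(2\min_i a_i)$ (with equality when $a_1=a_2=a_3$), so it is enough that $A(n)\,a_i>2nb\,\kappa$ for $i=1,2,3$; inserting the values of $A(n)$ and $\kappa$ from the curvature formula, this is exactly $b/a_i<2n+4$. The step I expect to be the main obstacle is the boundary analysis of the third paragraph — systematically excluding the ``mixed'' escape directions so as to isolate the single dangerous face $\{v=0\}$ — together with the sharp bound $M_\partial\le\kappa/(2\min_i a_i)$ and the bookkeeping that converts $A(n)/(4nb)>M_\partial$ into the clean inequality $b/a_i<2n+4$.
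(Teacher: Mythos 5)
Your proof is correct, and it takes a genuinely different route from the paper's. The paper proves this by degree theory: it embeds the problem in a one-parameter family of algebraic systems, where $\lambda=0$ is the prescribed Ricci problem on $\SU(2)$ (solvable and non-degenerate by Hamilton's Theorem~\ref{SO3PRC}), where $\lambda=4n$ is the desired system, and where the crucial step is a compactness estimate (Lemma~\ref{APEPRCE}) showing the solution set stays in a fixed bounded region of $(0,\infty)^4$ for all $\lambda\in[0,4n]$; homotopy invariance of the Brouwer degree then forces a solution at $\lambda=4n$. You instead run the direct method: maximize the scalar curvature $S$ over the slice $\mathcal M_T=\{\tr_gT=1\}$ (identified with the open $3$-simplex $\Delta$ via $u_i=a_i/t_i$, $v=4nb/s$), show $\limsup_{\partial\Delta}S\le M_\partial$ where $M_\partial$ is the value of the $\Sph^3$-scalar curvature at the Hamilton solution, and then show $\sup_\Delta S>M_\partial$ by a one-sided derivative along a curve $g_\mu$ entering $\Delta$ from the boundary face $\{v=0\}$. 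Interestingly, the two approaches funnel through exactly the same quantity: your condition $A(n)/(4nb)>M_\partial$ unwinds to $c(a_1/b,a_2/b,a_3/b)<4n+8$, which is precisely the sharper sufficient condition the paper records separately as Theorem~\ref{cond_c<4n+8}; and the bound $M_\partial\le 2/\min_i a_i$ you then use is the same estimate as the paper's Lemma~\ref{bound} (divide the three $\SU(2)$ equations by $x_1x_2x_3$, sum, and use $a+1/a\ge2$). So both proofs reduce Theorem~\ref{main2} to Theorem~\ref{cond_c<4n+8} plus Lemma~\ref{bound}; they differ only in how Theorem~\ref{cond_c<4n+8} itself is obtained. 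Your variational route makes the appearance of the constant $4n+8$ (the Einstein constant of the round metric) conceptually transparent; the paper's degree argument is arguably more robust, since it does not depend on the maximizer actually being attained or on a favourable sign of a boundary derivative. Two small points worth tightening if you write this up in full: the assertion that $S_{\Sph^3}\to-\infty$ whenever some $t_i\to\infty$ along the constraint $\sum a_i/t_i=1$ is not quite right (when two of the $t_i$ go to infinity at the same rate the limit is $0$, not $-\infty$), though the conclusion $\limsup\le 0<M_\partial$ you actually need still holds; and the boundary analysis deserves to be written case by case (which coordinates vanish, and at what relative rates) to make the claim ``$\limsup_{\partial\Delta}\widehat S\le M_\partial$'' airtight -- you flag this yourself as the expected sticking point, and indeed it is where the care is required.
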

In the special case where all $a_i$ are the same, this condition is necessary and sufficient; see Remark~\ref{rem_PRC_2par}.
 Among the metrics $g_{(x_1,x_2,x_3,s)}$ there exists a special subclass with $x_2=x_3$. These metrics are, in addition, invariant under the rotation of the plane corresponding to the $x_2,x_3$ variables. For these metrics, the sufficient condition in Theorem~\ref{main2} can be improved substantially; see Section~\ref{sec_4n3}.

For the question of the existence of Ricci iterations we obtain the following partial result.

\begin{main}\label{main3}
There exists a neighborhood $\mathcal O$ of a round metric $h_0$ in the space
of homogeneous metrics on $\Sph^{4n+3}$  such that for every $g_0\in\mathcal O$ a Ricci iteration starting with a multiple of $g_0$
exists and converges to a metric of constant curvature.
\end{main}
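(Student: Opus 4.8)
The plan is to reduce Theorem~\ref{main3} to a hyperbolicity statement for a finite-dimensional map and then to verify that statement by computing the linearized Ricci operator at the round metric. Since the Ricci tensor of a homogeneous metric is invariant under the same isometry group, $\Ric$ restricts to a map of the four-parameter family $\mathcal M$ of metrics $g_{(t_1,t_2,t_3,s)}$ on $\Sph^{4n+3}$ into itself, and this map is invariant under scaling $g\mapsto\lambda g$ because the Ricci tensor is. Rescale the round metric so that it becomes $h_0\in\mathcal M$ with $\Ric h_0=h_0$ (there is a unique such rescaling, since the round metrics form a single ray in $\mathcal M$); then $h_0$ is a genuine fixed point of $\Ric$ and $\RR h_0\subset\ker D\Ric|_{h_0}$. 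Assuming this kernel is exactly $\RR h_0$, which the computation below confirms, the image $\Sigma:=\Ric(\mathcal M)$ is near $h_0$ a three-dimensional submanifold with $T_{h_0}\Sigma$ equal to the image of $D\Ric|_{h_0}$, and $\Ric$ carries $\Sigma$ into itself, fixing $h_0$.

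I would then reformulate the iteration dynamically. For $g_0$ close to $h_0$, the ray $\RR_{>0}g_0$ meets $\Sigma$ transversally in a single nearby point $c\,g_0$, with $c$ depending smoothly on $g_0$ and $c\to1$ as $g_0\to h_0$; transversality holds because $h_0\notin T_{h_0}\Sigma$ (the only direction of $T_{h_0}\Sigma$ lying in the scaling plane is an eigenvector of $D\Ric|_{h_0}$ with nonzero eigenvalue, hence not a multiple of $h_0$). Any Ricci iteration starting from $c\,g_0$ must have $g_i\in\Sigma$ for all $i\ge1$, since $\Ric g_{i+1}=g_i$ forces $g_i\in\Ric(\mathcal M)$; hence such an iteration, if it exists, is unique and equals $g_i=F^{-i}(c\,g_0)$, where $F:=\Ric|_\Sigma\colon\Sigma\to\Sigma$. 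Thus Theorem~\ref{main3} follows once we show that $h_0$ is an attracting fixed point of $F^{-1}$, i.e.\ that $F$ is a local diffeomorphism near $h_0$ and every eigenvalue of $DF|_{h_0}$ has modulus strictly greater than $1$.

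The heart of the argument is therefore the spectrum of $D\Ric|_{h_0}$ on $T_{h_0}\mathcal M$, which I would extract from the Ricci curvature formula for $g_{(t_1,t_2,t_3,s)}$. The residual symmetry $S_3$ permuting $t_1,t_2,t_3$ and fixing $s$ commutes with $D\Ric|_{h_0}$ and splits $T_{h_0}\mathcal M$ into a two-dimensional trivial isotypic summand $\{(a,a,a,b)\}$ and a two-dimensional summand $\{(a_1,a_2,a_3,0):a_1+a_2+a_3=0\}$ carrying the standard representation. On the trivial summand $D\Ric|_{h_0}$ is a $2\times2$ matrix with $h_0$ in its kernel and one further real eigenvalue $\mu$; on the standard summand it acts as a real scalar $\nu$. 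So the spectrum is $\{0,\mu,\nu,\nu\}$, the spectrum of $DF|_{h_0}$ is $\{\mu,\nu,\nu\}$, and everything comes down to the two inequalities
\[
|\mu|>1,\qquad|\nu|>1.
\]
On $\Sph^3$ the analogue of $\nu$ equals $3$, which suggests a comfortable margin; the main obstacle is to establish both bounds for all $n$ from the explicit Ricci data, in particular to check that neither $\mu$ (governing the fiber/base ratio) nor $\nu$ (governing the squashing of the fiber) drifts to modulus $1$ as $n\to\infty$.

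Granting $|\mu|,|\nu|>1$, the rest is routine. In particular $\mu,\nu\neq0$, so $\ker D\Ric|_{h_0}=\RR h_0$ exactly, $F$ is a local diffeomorphism near $h_0$, and the transversality used above is valid. The differential of $F^{-1}$ at $h_0$ has spectrum $\{\mu^{-1},\nu^{-1},\nu^{-1}\}$, all of modulus $<1$, so by the standard contraction / stable-manifold argument there is a neighborhood $U$ of $h_0$ in $\Sigma$ on which $F^{-i}\to h_0$ exponentially. Shrinking to a neighborhood $\mathcal O\subset\mathcal M$ of $h_0$ small enough that $c\,g_0\in U$ for every $g_0\in\mathcal O$ (using $c\to1$), we conclude that for each $g_0\in\mathcal O$ the iteration $g_i=F^{-i}(c\,g_0)$ exists, consists of positive definite homogeneous metrics (by continuity from $\Ric h_0=h_0>0$), and converges to $h_0$, which has constant curvature. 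This is the assertion of Theorem~\ref{main3}.
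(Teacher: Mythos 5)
Your strategy---pass to the scaling quotient, view the Ricci map as a self-map $F$ of a three-dimensional slice through the round fixed point, and then invoke a contraction/stable-manifold argument once one knows every eigenvalue of $DF$ at the fixed point has modulus strictly greater than one---is structurally sound and is close to a dynamical-systems rephrasing of what the paper does. The $S_3$-equivariance of $D\Ric$ and the resulting decomposition of $T_{h_0}\mathcal M$ into a two-dimensional trivial summand $\{(a,a,a,b)\}$ and a two-dimensional standard summand $\{(a_1,a_2,a_3,0):\sum a_i=0\}$ is correct, and it does force the block spectrum $\{0,\mu,\nu,\nu\}$ you describe.

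The difficulty is that the two inequalities $|\mu|>1$ and $|\nu|>1$, which you explicitly defer (``Granting $|\mu|,|\nu|>1$, the rest is routine''), are the entire content of the theorem. Everything surrounding them is, as you say, routine, and it is routine precisely because the quantitative input has been displaced into those two unverified bounds. Nothing in your proposal establishes them, nor even provides a concrete route to them beyond ``extract from the Ricci curvature formula.'' The paper carries out that computation. Working on the concrete slice $s=1$ rather than an abstract quotient, it packages the normalized Ricci map as $f=(f_1,f_2,f_3)$ with
\[
f_i(x_1,x_2,x_3)=\frac{4nx_i^2+2\dfrac{(x_i+x_{i+1}-x_{i+2})(x_i+x_{i+2}-x_{i+1})}{x_{i+1}x_{i+2}}}{(4n+8)-2(x_1+x_2+x_3)},
\]
and a direct differentiation shows $Df|_{(1,1,1)}=\bigl(2+\tfrac{1}{2n+1}\bigr)\operatorname{Id}$. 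In particular the off-diagonal entries vanish at the fixed point---a small extra coincidence, since the $S_3$-symmetry alone forces only the block structure, not a scalar---so in your notation $\mu=\nu=\tfrac{4n+3}{2n+1}\in(2,3)$ for every $n\ge1$, uniformly bounded away from $1$. That computation is the missing step in your proposal. It is also where the paper's normalization (dividing through by the scalar $c=(4n+8)-2\sum x_i$ to land back on $s=1$) pays off: one obtains a single three-variable map whose Jacobian is scalar at the fixed point, so $f^{-1}$ is an immediate contraction and no separate transversality argument about the ray $\mathbb R_{>0}g_0$ meeting $\Sigma=\Ric(\mathcal M)$ is needed. Your framework is correct in outline, but without the eigenvalue computation it does not constitute a proof.
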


The organization is as follows. 
Section~\ref{sec_hm} describes the construction of homogeneous metrics on spheres and projective spaces in terms of Hopf fibrations and homogeneous spaces. Section~\ref{sec_12sum} summarizes the behavior of the Ricci iteration and ancient Ricci iteration for the two-parameter families $g_{t,s}$. Section~\ref{sec_S3} describes both behaviors for the left-invariant metrics on $\SU(2)\simeq \Sph^3$ and classifies the ancient Ricci iterations. The proofs of Theorems B and C appear in Sections~\ref{sec_4n3}--\ref{nearround}. Finally, we include in the Appendix a 
uniqueness result of independent interest concerning the prescribed Ricci curvature problem within the 4-parameter family of metrics on $\Sph^{4n+3}$.

\subsection*{Acknowledgements}
Parts of this work took place when Timothy Buttsworth was a visiting PhD student and Artem Pulemotov a Visiting Senior Lecturer at Cornell University. This research was supported by an Australian Government Research Training Program Scholarship (Timothy Buttsworth), ARC Discovery Early-Career Researcher Award DE150101548 (Timothy Buttsworth and Artem Pulemotov) and Discovery Project \linebreak DP180102185 (Artem Pulemotov), NSF grants DMS-1206284, DMS-1515703 (Yanir A.~Rubinstein) and DMS-1506148 (Wolfgang Ziller), and Sloan Research Fellowship (Yanir A.~Rubinstein).

\section{Homogeneous metrics on spheres and projective spaces}
\label{sec_hm}
\subsection{Homogeneous metrics on spheres}

Homogeneous metrics on spheres were classified by 
the fourth-named author \cite{WZ82}, and we now
review this classification. 
There are two ways to visualize such metrics. On the one hand, they can be described
via Hopf fibrations with fibers
$\Sph^1, \Sph^3, $ or $\Sph^7$. On the other hand,
they can be realized by
classical homogeneous space constructions. 

\def\ho{homogeneous }

Let us first give the former
description.
In even dimensions, the only
homogeneous metrics on the sphere are the round ones.
On odd-dimensional spheres, 
\ho metrics can be described geometrically in terms of Hopf fibrations,
$$
\h{\rm (i)}\quad\Sph^1\xhookrightarrow{} \Sph^{2n+1}\to \CP^n, 
\qquad
\h{\rm (ii)}\quad
\Sph^7\xhookrightarrow{} \Sph^{15}\to
\Sph^8, \qquad
\h{\rm (iii)}\quad
\Sph^3\xhookrightarrow{} \Sph^{4n+3}\to \HP^n.
$$ 
Scaling the round metric of curvature~1 on the total space by a constant $t>0$ in the direction of the fibers, and a constant $s>0$ in the horizontal direction, we obtain the metrics~$g_{t,s}$
in each of
the cases (i)--(iii).
The only remaining homogeneous metrics are given by
the family $g_{(t_1,t_2,t_3,s)}$ on $\Sph^{4n+3}$ for $n>0$ (see the beginning of Section~\ref{sec_4n3} for the details)
and the family of left-invariant metrics on~$\SU(2)$.

Let us now relate this to the homogeneous space description; see~\cite[\S2]{PR} and references therein for further details.
A homogeneous metric is a $G$-invariant Riemannian metric
on the homogeneous space
$$
M:=G/H,
$$
where $H$ is a closed subgroup 
of a Lie group $G$.
We assume that $G$ is compact and that $G$ and $H$ are connected. Let $\mathfrak g, \mathfrak h$ denote the Lie algebras of $G,H$, and let 
$Q$ be an $\Ad_G(G)$-invariant inner product on $\mathfrak g$.
The $Q$-orthogonal complement of $\mathfrak h$ in $\mathfrak g$ is an $\Ad_G(H)$-invariant subspace of $\mathfrak g$, denoted 
by $\mathfrak m$. Thus, 
$\mathfrak g=\mathfrak m\oplus \mathfrak h,
$
and
\begin{equation}\label{calMEq}
\begin{aligned}
\calM &:=\{\h{$G$-invariant Riemannian metrics on $M$}\}
\cr&
\cong\{\h{$\h{Ad}_G(H)$-invariant inner products on $\mathfrak m$}\}.
\end{aligned}
\end{equation}
Consider a $Q$-orthogonal $\Ad_G(H)$-invariant decomposition
\begin{align}\label{m_decomp}
\mathfrak m=\mathfrak m_1\oplus\cdots\oplus\mathfrak m_q
\end{align}
such that $\Ad_G(H)|_{\mathfrak m_i}$ is irreducible for each $i=1,\ldots,q$.
Ignoring the case $q=1$
(where $\dim\calM=1$)
the only homogeneous spheres
are as follows \cite[p. 352]{WZ82}:
\begin{equation*}
\begin{aligned}
q=2:& \quad \Sph^{2n+1}=
\SU(n+1)/\SU(n)=\U(n+1)/\U(n),
\cr 
& \quad \Sph^{4n+3}=
\Sp(n+1)\Sp(1)/\Sp(n)\Sp(1),
\cr
& \quad \Sph^{15}=\h{Spin}(9)/
\h{Spin}(7),
\cr
q=3:& \quad \Sph^{4n+3}=
\Sp(n+1)\U(1)/\Sp(n)\U(1),
\cr 
& \quad \Sph^3=\SU(2),
\cr 
q=4:& \quad
\Sph^{4n+3}=
\Sp(n+1)/\Sp(n).
\end{aligned}
\end{equation*}
On all of these, we assume $Q$ is induced by the round metric of curvature~1.

Let us explain how the two descriptions are tied.
Via \eqref{calMEq},
we can identify $\mathcal M$ with a subset of $\mathfrak m^*\otimes\mathfrak m^*$.
The Hopf fibrations can be written in the form
$$ K/H \ra G/H\ra G/K,$$
where $K$ is a subgroup of~$G$ containing~$H$.
The representation $\Ad_G(H)$ on $K/H$
is 1-, 3- or 7-dimensional 
if $q=2$, splits into three
1-dimensional subspaces
if $G=\SU(2)$ or $q=4$, and splits into
two irreducible subspaces of dimensions 1 and 2 if $G\ne\SU(2)$ and $q=3$.
Thus, say in the 
case of $q=2$,
\begin{align*}
g_{t,s}
=
t\pi_1^* Q
+
s\pi_2^* Q,
\end{align*}
where $\pi_i:\mathfrak m\ra \mathfrak m_i
$ denote the natural projections induced by~\eqref{m_decomp}.

Another useful observation is
that the $\Sp(n+1)\Sp(1)$- and $\Sp(n+1)\U(1)$-invariant metrics
on $\Sph^{4n+3}$ are 
actually all contained
in the family $\{g_{(t_1,t_2,t_3,s)}\}$ (see Section~\ref{sec_4n3} for the rigorous definition of this family). The $\Sp(n+1)\Sp(1)$-invariant ones are precisely the metrics
$\{g_{(t,t,t,s)}\}$,
while the $\Sp(n+1)\U(1)$-invariant ones are precisely the metrics
\begin{equation*}
\begin{aligned}
\{g_{(t,t,u,s)}\}
\cup \{g_{(t,u,t,s)}\}
\cup \{g_{(u,t,t,s)}\}.
\end{aligned}
\end{equation*}

\subsection{Homogeneous metrics on complex projective spaces }

Once again, we ignore the 
case $q=1$. One is then left with
the odd-dimensional 
complex projective spaces
$\CC\PP^{2n+1}$ that
can be described as
$\Sp(n+1)/\Sp(n)\U(1)$,
and in this case, $q=2$
\cite[p. 356]{WZ82}.
For similar reasons to those in the 
previous subsection, the 2-parameter space
of homogeneous
metrics
$
\{t\pi_1^* Q
+
s\pi_2^* Q\,:\, t,s>0
\}$, with an appropriate choice of $Q$,
coincides with the family $\{g_{t,s}\}$
obtained from the Hopf fibration
$\mathbb S^2\xhookrightarrow{} \CP^{2n+1}\to \HP^{n}$ 
described in Section~\ref{IntroSec}.


\section{Two isotropy components}
\lb{2compSec}
\label{sec_12sum}

In this section we briefly recall how some of our previous work handles the case of circle, 2-sphere and 7-sphere
fibers, as well as 3-sphere
fibers with additional symmetry.
Namely, the cases we consider here are:
\begin{equation}
\begin{aligned}
\label{s2casesEq}
\Sph^{2n+1} &=
\SU(n+1)/\SU(n)=\U(n+1)/\U(n),
\cr 
\Sph^{4n+3} &=
\Sp(n+1)\Sp(1)/\Sp(n)\Sp(1),
\cr
\Sph^{15} &=\h{Spin}(9)/
\h{Spin}(7),
\cr
\CC\PP^{2n+1} &=
\Sp(n+1)/\Sp(n)\U(1).
\end{aligned}
\end{equation}
Thus, $(\dim \mf m_1,\dim \mf m_2)\in\{(1,2n),(3,4n),(7,8),
(2,4n)\}$.
This means $\dim \mf m_1\not=\dim \mf m_2$, so
$\Ad_G(H)|_{\mathfrak m_1}$ 
is inequivalent to $\Ad_G(H)|_{\mathfrak m_2}$,
i.e.,
the main assumption of
\cite[Theorems 2.1, 2.4]{PR}
is satisfied.
The other assumption in
those theorems is that
$Q([X,Y],Z)\ne0$ 
for some $X\in\mathfrak m_1$ and $Y,Z\in\mathfrak m_2$.
As explained in
\cite[\S2]{PR}, this holds unless
all metrics in $\calM$
have the same Ricci curvature,
which is not
the case for the spaces
\eqref{s2casesEq} by the
curvature formulas
in \cite{WZ82}.
Finally, to formulate the conclusion of these theorems
we need one more piece of information on the spaces
\eqref{s2casesEq}, namely,
the classification of 
Einstein metrics on them.
Denote
\begin{equation*}
\begin{aligned}
\calE&:=\{\h{Einstein metrics in $\calM$}\},\\
\alpha_-&:=
\inf\big\{{t}/{s}\,:\,
T=
t\pi_1^* Q
+
s\pi_2^* Q
\in\mathcal E\big\},
\\
\alpha_+&:=
\sup\big\{{t}/{s}\,:\,
T=
t\pi_1^* Q
+
s\pi_2^* Q
\in\mathcal E\big\}.
\end{aligned}
\end{equation*}
Thus, 
$(\alpha_-,\alpha_+)\in
\{(1,1),(1/(2n+3),1),(3/11,1),(1/(n+1),1)\}$
\cite{WZ82}.
Given all this, we can now
completely describe the Ricci iteration
and ancient Ricci iterations on \eqref{s2casesEq}.

We start with the simplest case of
$\mathbb S^{2n+1}$ where
there is a unique Einstein metric. In this case $H$
is not maximal, but 
a central extension thereof is, with Lie algebra~$\mf h\oplus \mf u(1)$.
The following is a consequence of~\cite[Theorems 2.1 (ii-a), 2.4 (ii-a)]{PR}.

\begin{theorem}\label{2par_dV1}
Let $g_{t,s}$ be a homogeneous metric on $\Sph^{2n+1}$. Then:
\begin{itemize}
\item[(a)] There exists a unique Ricci iteration starting with $cg_{t,s}$ for some $c>0$, and it
smoothly converges to a round metric.
\item[(b)] There exists an ancient  Ricci iteration starting with $g_{t,s}$ if and only if $t\le s$. If $t< s$, this iteration converges in the Gromov--Hausdorff topology to a multiple of the Fubini--Study metric on
$\CC\PP^n$ by shrinking the
fibers to $0$, i.e., $t\to 0$.
\end{itemize}
\end{theorem}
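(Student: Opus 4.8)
The plan is to deduce Theorem~\ref{2par_dV1} from \cite[Theorems 2.1, 2.4]{PR}: most of the hypotheses were already checked in the paragraphs preceding the statement, so the real work is to pin down which alternative of those theorems applies and to identify the Gromov--Hausdorff limit in part~(b).

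First I would make explicit the dictionary with the normalized two-summand framework of \cite{PR}. By \eqref{calMEq} the space $\calM$ of $\U(n+1)$-invariant (equivalently $\SU(n+1)$-invariant) metrics on $\Sph^{2n+1}$ is two-dimensional and is exhausted by $\{g_{t,s}=t\pi_1^*Q+s\pi_2^*Q\}$, where $\mf m_1$ is the $1$-dimensional vertical summand and $\mf m_2$ the $2n$-dimensional horizontal one. Since the prescribed Ricci curvature equation and the Ricci iteration are homothety-equivariant, everything is controlled by the ratio $x=t/s\in(0,\infty)$, and the Ricci operator descends to a one-dimensional dynamical system in this parameter whose fixed points, modulo scaling, are exactly the Einstein metrics in $\calM$.

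Next I would confirm that we are in alternative (ii-a) of \cite[Theorems 2.1, 2.4]{PR}. The two structural hypotheses hold: $\dim\mf m_1=1\ne 2n=\dim\mf m_2$ forces $\Ad_G(H)|_{\mf m_1}\not\cong\Ad_G(H)|_{\mf m_2}$, and $Q([X,Y],Z)\ne0$ for suitable $X\in\mf m_1$, $Y,Z\in\mf m_2$ because the metrics in $\calM$ do not all share the same Ricci curvature, by the curvature formulas of \cite{WZ82}. The input that selects the alternative is the Einstein classification: on $\Sph^{2n+1}$ the round metric is the unique Einstein metric up to scale, so $\alpha_-=\alpha_+=1$, which is precisely the situation of case~(ii-a).

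Finally I would read off the conclusions. For part~(a), \cite[Theorem 2.4 (ii-a)]{PR} gives, for every starting metric, a unique Ricci iteration beginning at a suitable multiple of $g_{t,s}$ and converging smoothly to the unique Einstein metric, i.e.\ to a round metric. For part~(b), the same theorem characterizes the metrics admitting an ancient iteration by the condition that $x=t/s$ lie on the prescribed side of the Einstein value $1$, which in the present normalization reads $t\le s$ (for $t>s$ repeated application of $\Ric$ eventually leaves the cone of positive-definite tensors, so no ancient iteration exists; for $t=s$ the sequence is the round metric up to scale), and for $t<s$ the iterated Ricci operator drives $x$ monotonically to $0$. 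The one step needing an independent argument — and the part I expect to be the main obstacle — is identifying the limit geometry: as $x\to0$ with the horizontal scale fixed, $g_{t,s}$ shrinks the $\Sph^1$-fibers of the Hopf fibration $\Sph^1\xhookrightarrow{}\Sph^{2n+1}\to\CP^n$ to zero length while keeping the horizontal metric, so the standard collapsing picture for Riemannian submersions yields Gromov--Hausdorff convergence to the base equipped with its quotient metric, a multiple of the Fubini--Study metric on $\CP^n$; this matches the behavior predicted by \cite[Conjecture 2.5]{PR}.
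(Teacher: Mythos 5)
Your proposal is correct and follows the paper's own route: the hypotheses of \cite[Theorems 2.1, 2.4]{PR} are checked in the preamble to Section~\ref{sec_12sum}, after which the paper simply invokes \cite[Theorems 2.1 (ii-a), 2.4 (ii-a)]{PR}, exactly as you do. One small imprecision in your verification step: the alternative (ii-a) in \cite{PR} is selected by the algebraic feature recorded immediately before the theorem statement (here $\Ad_G(H)$ acts trivially on $\mathfrak m_1$ because a central extension of $H$ with Lie algebra $\mathfrak h\oplus\mathfrak u(1)$ is maximal), not by the Einstein count $\alpha_-=\alpha_+=1$ directly; the latter is a consequence of the former, so your conclusion is unaffected, and your collapsing-fibration description of the Gromov--Hausdorff limit agrees with Remark~\ref{rem_PRC_2}.
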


In the remaining three cases,
$H$
is not maximal and nor 
does $\Ad_G(H)$ act trivially
on~$\mf m_1$,
and there are two Einstein
metrics. 
The following is a consequence of~\cite[Theorems 2.1 (ii-b), 2.4 (ii-b)]{PR}.

\begin{theorem}\label{dV3} Assume that $g_{t,s}$ is a homogeneous metric on
$\Sph^{4n+3}$
with fibers of dimension
$3$ (or $\Sph^{15}$
with fibers of dimension
$7$, or $\CC\PP^{2n+1}$). Then:
\begin{itemize}
\item[(a)] There exists a  Ricci iteration starting with $cg_{t,s}$ for some $c>0$ if and only if $t/s\ge 1/(2n+3)$
(or $t/s\ge 3/11$, or
$t/s\ge 1/(n+1)$). Such a Ricci iteration is unique. Unless $t/s = 1/(2n+3)$ 
(or $t/s= 3/11$, or
$t/s= 1/(n+1)$)
such an iteration converges towards a round metric (or a multiple of the standard metric in the case of $\CC\PP^{2n+1}$).
\item[(b)] There exists an ancient Ricci iteration starting with $g_{t,s}$ if and only if $t/s\le 1$. If 
$t/s<1$, this ancient iteration converges to the second Einstein metric in $\calE$.
\end{itemize}
\end{theorem}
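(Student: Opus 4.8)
The strategy is to quote the classification theorems from~\cite{PR} and feed in the specific numerical data for the three spaces in question, since all three satisfy the structural hypotheses: two inequivalent irreducible isotropy summands (as $(\dim\mf m_1,\dim\mf m_2)\in\{(3,4n),(7,8),(2,4n)\}$), a non-maximal $H$ whose isotropy action on $\mf m_1$ is nontrivial, and $Q([X,Y],Z)\ne0$ for suitable $X\in\mf m_1$, $Y,Z\in\mf m_2$. First I would recall from \cite[Theorem 2.1]{PR} that, writing a metric as $g_{t,s}=t\pi_1^*Q+s\pi_2^*Q$, the prescribed Ricci curvature equation $\Ric g=cT$ with $T=g_{t_0,s_0}$ is solvable (for some $c>0$, among homogeneous metrics) precisely when $t_0/s_0$ exceeds the smaller of the two Einstein ratios; here that threshold is $\alpha_-\in\{1/(2n+3),\,3/11,\,1/(n+1)\}$. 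Combined with the iteration mechanics — at each step one solves $\Ric g_{i+1}=g_i$ and the ratio $t_i/s_i$ evolves monotonically toward the forward-attracting Einstein metric, which is the round/standard one at ratio $\alpha_+=1$ — this yields part~(a): solvability and uniqueness of the iteration starting from $cg_{t,s}$ holds iff $t/s\ge\alpha_-$, and for $t/s>\alpha_-$ the ratios increase monotonically to $1$, giving smooth convergence to the round metric (or to a multiple of the standard metric on $\CC\PP^{2n+1}$). The boundary case $t/s=\alpha_-$ is the stationary iteration sitting at the second Einstein metric.

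For part~(b) I would invoke \cite[Theorem 2.4]{PR}, which governs ancient Ricci iterations in the two-inequivalent-summand setting: running the iteration backward, $g_{i-1}=\Ric g_i$, requires that at every step one can \emph{invert} the Ricci operator within $\calM$, and the obstruction is exactly that the backward orbit must stay in the range of $g\mapsto\Ric g$, which forces $t/s\le\alpha_+=1$. When $t/s=1$ one is already at the round Einstein metric and the ancient iteration is constant; when $t/s<1$, the backward ratios decrease monotonically and converge to $\alpha_-$, i.e.\ to the second Einstein metric in $\calE$, which is the backward attractor. The collapse picture (shrinking fibers) is not asserted here — unlike Theorem~\ref{2par_dV1}(b) the limit is a genuine Einstein metric in $\calM$, not a metric on the base — so no Gromov--Hausdorff degeneration argument is needed.

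The only genuinely space-specific input is the list of Einstein ratios $(\alpha_-,\alpha_+)$, which I would read off from the curvature formulas in~\cite{WZ82}: on $\Sph^{4n+3}$ with $3$-dimensional fibers one gets the two Einstein metrics at ratios $1/(2n+3)$ and $1$ (the latter being the round one), on $\Sph^{15}$ at $3/11$ and $1$, and on $\CC\PP^{2n+1}$ at $1/(n+1)$ and $1$. The main (minor) obstacle is bookkeeping: one must check that the normalization of $Q$ used here matches that of~\cite{PR} so that the threshold ratios transcribe correctly, and that the relevant sub-cases of \cite[Theorems 2.1, 2.4]{PR} — specifically parts (ii-b), corresponding to $H$ non-maximal with nontrivial action on $\mf m_1$ and two Einstein metrics — are the applicable ones. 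Once the dictionary is fixed, (a) and (b) follow immediately by substitution.
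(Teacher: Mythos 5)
Your approach is exactly the paper's: the result is stated there as a direct consequence of~\cite[Theorems 2.1 (ii-b), 2.4 (ii-b)]{PR}, with the Einstein ratios $(\alpha_-,\alpha_+) = (1/(2n+3),1)$, $(3/11,1)$, $(1/(n+1),1)$ read off from~\cite{WZ82}, so the structure and the invocation of the right sub-cases are all correct. One inaccuracy worth correcting: you state that~\cite[Theorem 2.1]{PR} characterizes solvability of the one-step prescribed Ricci curvature equation $\Ric g = cT$ by the threshold $t_0/s_0 > \alpha_-$. In fact, as the paper itself records in Remark~\ref{rem_PRC_2par}, that equation is solvable on the strictly larger half-line $t_0/s_0 > 1/(2n+4)$ (resp.~$3/14$, $1/(n+2)$). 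The stricter threshold $\alpha_-$ governs the existence of the \emph{full forward} iteration: for $t/s$ strictly between the two cutoffs, the first step is solvable but the iteration terminates after finitely many steps because the ratio eventually drifts below the one-step solvability threshold. This gap between ``first step exists'' and ``iteration exists'' is precisely the content of~\cite[Theorem 2.1 (ii-b)]{PR}, so while your conclusion is correct, the paraphrase of the cited theorem should be adjusted to avoid suggesting that one-step solvability alone propagates by the monotonicity of the ratio.
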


These results show that the behavior of ancient Ricci iterations in the four
cases of this section is the same as for ancient solutions to the Ricci flow~\cite{BKN}.

\begin{remark}\label{rem_PRC_2}
One can give an alternative description of 
Theorems~\ref{2par_dV1}--\ref{dV3} that relies on the 
Hopf fibrations picture from \cite{WZ82} instead of the
languange of homogeneous spaces. 
For the case of spheres in these theorems, if $V$ and $H$ are the vertical and horizontal space of a 
Hopf fibration of dimensions $d_V$ and $d_H$, then 
 $g_{t,s}=t{\hat g}_{|V}+s{\hat g}_{|H}$, where $\hat g$ is the metric of curvature $1$ on $\Sph^N, N= d_V+d_H$, and 
for $u\in V, x\in H$,
\begin{align*}
\Ric g_{t,s}(u,u)&=(d_V - 1) + d_H{t^2}/{s^2},\\ 
\Ric g_{t,s}(x,x)&=d_H + 3d_V - 1 - 2d_Vt/s, \qquad
\Ric g_{t,s}(u,x)=0.
\end{align*}
One can then obtain Theorems 
\ref{2par_dV1}--\ref{dV3} directly from these formulas
by monotonicity arguments as in \cite[\S4.3--4.4]{PR}. 
Similarly, for 
the homogeneous metrics on $\CP^{2n+1}$ one can use the 
Hopf fibration
$\CP^1\xhookrightarrow{} \CP^{2n+1}\to \mathbb{HP}^{n}$, where now 
for $u\in V, x\in H$,
$$
\Ric g_{t,s}(u,u)=4n+8-4t/s, \quad
\Ric g_{t,s}(x,x)=4nt/s-4s/t,\quad
\Ric g_{t,s}(u,x)=0.
$$
\end{remark}

\begin{remark}\label{rem_PRC_2par}
In the setting of Theorem~\ref{dV3}, the condition for solving the equation  $\Ric h=cg_{t,s}$ is weaker than the condition for the existence of the Ricci iteration. In fact, a solution exists if and only if
 $t/s> 1/(2n+4)$ 
(or $t/s> 3/14$, or 
$t/s> 1/(n+2)$), as a computation starting from the formulas in Remark \ref{rem_PRC_2}
shows; cf.~\cite[Proposition~3.1]{AP16}.
This demonstrates that the behavior of the Ricci iteration can be different from the behavior of the Ricci flow \cite{MB14} since in some cases even the first iteration is not possible. Notice though that one also has solutions for some negative values of $s$, i.e., the prescribed Ricci tensor does not have to be positive definite.
\end{remark}

\section{Metrics on $\Sph^3$}\label{sec_S3}

In this section, we denote by
$$\calM=\calM(\Sph^3)=\calM(\SU(2))$$
 the set of left invariant metrics on $\Sph^3=\SU(2)$.
Given a metric $g\in\calM$, we can diagonalize $g$ with respect to a basis $\{e_1,e_2,e_3\}$ such that
 \begin{align}\label{Lie}
[e_i,e_{i+1}]=2e_{i+2}, \q i\in\{1,2,3\},
\end{align}
with indices mod 3; thus
\begin{align}\label{diag}
g(e_i,e_j)=x_i\delta_{ij}
\end{align}
for some $x_1,x_2,x_3>0$.  A computation shows that~\cite[\S6]{RH84}
\begin{align}\label{eq_Ricci}
\Ric g\,(e_i,e_j)
&=
\frac{2(x_i^2-(x_{i+1}-x_{i+2})^2)}{x_{i+1}x_{i+2}}\delta_{ij}
=:r_i\delta_{ij}.
\end{align}
Thus $\Ric g$ is diagonal with respect to the same basis.
The following result by Hamilton \cite[Theorem~6.1]{RH84} implies the existence of Ricci iterations on~$\SU(2)$.

\begin{theorem}[Hamilton]\label{SO3PRC}
For every metric  $T\in\calM(\Sph^3)$,
there exists a  left-invariant Riemannian metric $g$, unique up to scaling, such that $\Ric g=cT$ for some positive constant~$c$.
\end{theorem}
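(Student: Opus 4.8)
The plan is to recast Hamilton's theorem as a statement about the map $\Ric$ restricted to left-invariant metrics, viewing both $g$ and $T$ via their diagonalizing coefficients. By the formula~\eqref{eq_Ricci}, if we diagonalize a metric $g$ with $g(e_i,e_j)=x_i\delta_{ij}$ then $\Ric g$ is simultaneously diagonal with eigenvalues $r_i=\frac{2(x_i^2-(x_{i+1}-x_{i+2})^2)}{x_{i+1}x_{i+2}}$. Since the target $T$ is an arbitrary left-invariant metric, after applying an element of $\Aut(\SU(2))=\SO(3)$ we may assume $T=\diag(b_1,b_2,b_3)$ in the same basis~\eqref{Lie}, with $b_i>0$. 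The task is then to show that the system $r_i(x_1,x_2,x_3)=c\,b_i$, $i=1,2,3$, has a solution $(x_1,x_2,x_3)\in(0,\infty)^3$ for some $c>0$, and that this solution is unique up to the obvious scaling $x_i\mapsto \mu x_i$ (under which $r_i$ is invariant, forcing the single free constant $c$).

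First I would reduce to a scale-invariant formulation. Since $r_i$ is homogeneous of degree $0$ in $(x_1,x_2,x_3)$, normalize, say, by requiring $x_1+x_2+x_3=1$ (or $x_1x_2x_3=1$), so that $(x_1,x_2,x_3)$ ranges over the open $2$-simplex $\Sigma$. Likewise normalize the target to $b_1+b_2+b_3=1$. One then wants: for every $(b_1,b_2,b_3)\in\Sigma$ there is a \emph{unique} $(x_1,x_2,x_3)\in\Sigma$ with $\big(r_1:r_2:r_3\big)=(b_1:b_2:b_3)$ — i.e., the ``normalized Ricci map'' $\mathcal R\colon\Sigma\to\RR\PP^2$, $(x_i)\mapsto[r_1:r_2:r_3]$, when restricted to the region where some $r_i$ is nonzero, is a bijection onto the positive chart $\Sigma\subset\RR\PP^2$. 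Hamilton's original argument (which I would reproduce in spirit) proceeds by a variational / continuity method: one constructs the solution by a gradient-flow or degree argument, using that the round metric maps to the barycenter and that as $(x_i)$ degenerates toward the boundary of $\Sigma$ the direction $[r_1:r_2:r_3]$ is controlled. For uniqueness, the key input is a monotonicity or convexity statement — Hamilton shows the relevant map is a diffeomorphism by checking the Jacobian of $(x_i)\mapsto(r_i)$ (equivalently the linearization of the prescribed-Ricci operator) is nonsingular on the relevant domain, which amounts to a determinant computation with the entries read off from~\eqref{eq_Ricci}.

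Concretely, the steps would be: (1) reduce to the diagonal system via $\Aut(\SU(2))$ and homogeneity, isolating the scalar $c$; (2) set up the normalized map $\mathcal R\colon\Sigma\to\RR\PP^2$ and identify its image on the boundary behavior, showing the positive chart is covered (existence) — this is where a topological-degree or continuity argument enters, with the round metric as a reference point; (3) prove injectivity by showing the Jacobian $\partial(r_1,r_2,r_3)/\partial(x_1,x_2,x_3)$ has rank $2$ transverse to the scaling direction wherever $\Ric g\ne 0$, so $\mathcal R$ is a local diffeomorphism, and then upgrade to global injectivity (e.g.\ via properness / the fact that a proper local diffeomorphism onto a simply connected target is a covering of degree one); (4) combine to get the unique solution $g$, with $c$ determined by the chosen normalizations. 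The main obstacle is step~(3): the uniqueness/injectivity of the prescribed Ricci map is genuinely the delicate part — the existence via continuity is comparatively soft, but showing $\Ric$ is globally one-to-one on $\calM(\Sph^3)$ modulo scaling requires the sign analysis of the Jacobian determinant that is the technical heart of Hamilton's Theorem~6.1. Since we are entitled to invoke that theorem as stated, in the write-up I would simply quote it; the ``proof'' here is really the unpacking in steps (1)--(2) showing why Hamilton's statement yields exactly ``$\Ric g = cT$, unique up to scaling'' in our normalization.
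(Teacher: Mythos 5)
The paper does not actually prove this statement: it is presented as ``Theorem (Hamilton)'' and established by citing \cite[Theorem 6.1]{RH84}, with a subsequent remark noting an alternative variational proof of the existence half (the solution is the global maximizer of scalar curvature on $\{h\in\calM(\SU(2)) : \tr_hT=1\}$, following \cite{AP16,AP18}). Your proposal, after a lengthy reconstruction, arrives at the same place — quote Hamilton — so in substance you and the paper do the same thing. Two cautions on the sketch itself, though. Your description of Hamilton's argument (continuity/degree for existence; Jacobian nondegeneracy plus a proper-covering step for global injectivity) is a plausible modern paraphrase rather than a report of what \cite{RH84} actually does, and the distinction matters here: the present paper later leans on the specific structure of Hamilton's proof (see ``By examining his proof one easily sees that this solution depends differentiably on $T_i$'' in the proof of Theorem~\ref{thm_PRC_4n3}), which a black-box degree argument would not deliver. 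Separately, reducing to the diagonal system $r_i(x)=cb_i$ tacitly seeks only $g$ that are diagonal in the basis diagonalizing $T$; to conclude uniqueness up to scaling within all of $\calM(\Sph^3)$ one should add the observation that any solution $g$ is diagonal in some $Q$-orthonormal frame satisfying \eqref{Lie}, hence $T=c^{-1}\Ric g$ is diagonal in that frame as well, so the two frames agree up to the finite gauge symmetry when the $b_i$ are distinct (the remaining cases following from the symmetry of the equations). Neither point is fatal — you correctly end by invoking Hamilton — but both are worth registering if you ever do unpack the citation.
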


\begin{remark}
In addition to Hamilton's original proof,
the existence portion of Theorem~\ref{SO3PRC} can be proven by variational methods  based on~\cite[Lemma~2.1]{AP16}. In fact, one can show that the metric $g$ is the global maximum point of the scalar curvature functional on the set $\mathcal M_T(\SU(2))=\{h\in\mathcal M(\SU(2))\,|\,\tr_hT=1\}$
~\cite{AP18}.
\end{remark}

\subsection{Convergence of the Ricci iteration}
\label{sec_pf_conv}

We now prove Theorem~\ref{Main} (a). The proof  relies on a simple monotonicity lemma.
 To state it, suppose $g$ is a left-invariant metric on $\SU(2)$
 satisfying \eqref{diag}.
From \eqref{eq_Ricci} it follows that:

\begin{align}\label{eq_aux111}
\frac{r_i}{r_j}
=
\frac{x_{\{i,j\}^c}+(x_i-x_j)}
{x_{\{i,j\}^c}-(x_i-x_j)}
\cdot \frac{x_{i}}{x_j},
\end{align}
where $\{i,j\}^c:=\{1,2,3\}\setminus\{i,j\}$, provided $r_j\not=0$.
This shows:

\begin{lemma}\label{lem_monot}
Assume the Ricci curvature of the metric $g$ is nondegenerate.
If
$$
{x_i}/{x_j}\ge1
\q (<1),
$$
then
$$
{r_i}/{r_j}\ge{x_i}/{x_j}
\q
({r_i}/{r_j}<{x_i}/{x_j})
$$
for all $i,j\in\{1,2,3\}$.
\end{lemma}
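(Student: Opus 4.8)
The plan is to start from the explicit formula \eqref{eq_aux111} for the ratio $r_i/r_j$ and show directly that the map $x_i/x_j \mapsto r_i/r_j$ dominates (or is dominated by) the identity on the relevant range. Write $\rho := x_i/x_j$ and $c := x_{\{i,j\}^c}>0$. Then \eqref{eq_aux111} becomes
\[
\frac{r_i}{r_j}
=
\frac{c+(x_i-x_j)}{c-(x_i-x_j)}\cdot\rho
=
\frac{c+x_j(\rho-1)}{c-x_j(\rho-1)}\cdot\rho.
\]
The first step is to observe that when $\rho\ge 1$ we have $x_i-x_j\ge 0$, so the fraction $\dfrac{c+(x_i-x_j)}{c-(x_i-x_j)}$ is of the form $\dfrac{c+\delta}{c-\delta}$ with $\delta\ge 0$; since the Ricci curvature is assumed nondegenerate, $r_j\ne 0$ forces $c-\delta\ne 0$, and positivity of all the quantities $r_i$ (which we should record holds once one $r_k\ne0$, because $\Ric g$ being nondegenerate and $\SU(2)$ compact forces it to be definite, hence positive-definite after reordering) forces $c-\delta>0$. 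Therefore $\dfrac{c+\delta}{c-\delta}\ge 1$, and multiplying by $\rho>0$ gives $r_i/r_j\ge \rho = x_i/x_j$. The case $\rho<1$ is symmetric: then $\delta = x_i-x_j<0$, so $\dfrac{c+\delta}{c-\delta}<1$, and multiplying by $\rho$ yields the strict inequality $r_i/r_j<x_i/x_j$.

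The second step is simply to package this cleanly, taking care of the degenerate sub-case: if $\rho=1$ then $\delta=0$ and $r_i/r_j=1=x_i/x_j$, so the non-strict inequality ${r_i}/{r_j}\ge{x_i}/{x_j}$ still holds, consistent with the statement. One should also note the hypothesis ``Ricci curvature nondegenerate'' is exactly what guarantees $r_j\ne0$ so that \eqref{eq_aux111} is valid and the denominator $c-\delta$ is nonzero; and then the sign $c-\delta>0$ follows because, after permuting indices so that $x_1\le x_2\le x_3$, one checks from \eqref{eq_Ricci} that $r_1>0$ whenever the curvature is nonzero, and a nondegenerate $\Ric g$ on a compact homogeneous space must be positive-definite (alternatively, one invokes that left-invariant metrics on $\SU(2)$ with nondegenerate Ricci have positive Ricci, a standard fact), so every $r_k>0$; then $r_i/r_j>0$ and $\rho>0$ force $c-\delta>0$.

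I expect no serious obstacle here: the lemma is essentially immediate from \eqref{eq_aux111} once one knows the denominator $c-(x_i-x_j)$ is positive. The only point requiring a little care — and the one I would flag as the ``hard part,'' though it is mild — is justifying that nondegeneracy of $\Ric g$ implies positivity of all the $r_i$ (equivalently, that the denominator has the right sign), since \emph{a priori} \eqref{eq_aux111} only tells us the denominator is nonzero. This is handled by the remark above about nondegenerate Ricci on $\SU(2)$ being positive-definite. With that in hand, the proof is two lines: split on the sign of $x_i-x_j$ and read off the sign of $\dfrac{c+(x_i-x_j)}{c-(x_i-x_j)}-1 = \dfrac{2(x_i-x_j)}{c-(x_i-x_j)}$.
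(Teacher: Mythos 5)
Your proof is in essence the same as the paper's, which offers nothing beyond the remark that \eqref{eq_aux111} ``shows'' the lemma. You correctly identify the one subtlety this elides, namely the sign of the denominator $x_{\{i,j\}^c}-(x_i-x_j)$, and once that sign is settled your two-line finish is exactly right. However, the justification you supply for the sign --- that nondegenerate Ricci curvature on $\SU(2)$, or on a compact homogeneous space, must be definite and then positive definite ``after reordering'' --- is false, and so is the ``standard fact'' you invoke as an alternative.

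Take $(x_1,x_2,x_3)=(\epsilon,1,2)$ with $\epsilon>0$ small. From \eqref{eq_Ricci} one computes $r_1=\epsilon^2-1<0$, $r_2=(1-(2-\epsilon)^2)/\epsilon<0$, $r_3=2\big(4-(\epsilon-1)^2\big)/\epsilon>0$, so $\Ric g$ is nondegenerate of signature $(-,-,+)$; compare Milnor~\cite{JM76} for the classification of Ricci signatures on $\SU(2)$. This same example shows that the lemma as literally stated is not quite right: for $(i,j)=(3,1)$ one has $x_3/x_1=2/\epsilon>1$ yet $r_3/r_1<0<x_3/x_1$. The hypothesis should read ``positive definite'' rather than ``nondegenerate.'' This is harmless for the paper, since the lemma is only ever applied to a metric $g$ whose Ricci curvature equals a term of a Ricci iteration, hence is itself a Riemannian metric and automatically positive definite. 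But your argument does not establish positivity; it substitutes a false implication for a missing hypothesis. The correct completion is the one you sketch in your second step once one grants $r_k>0$ for all $k$: then $r_i/r_j>0$ and $x_i/x_j>0$ force the two factors $x_{\{i,j\}^c}\pm(x_i-x_j)$ in \eqref{eq_aux111} to have the same sign, and since they sum to $2x_{\{i,j\}^c}>0$ both are positive; the remainder of your argument then goes through exactly as written.
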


 Given a left-invariant metric $T$ on $\SU(2)$, Theorem~\ref{SO3PRC} implies that there exists a left-invariant metric $g$, unique up to scaling, such that $\Ric g=cT$ for some $c>0$. This fact implies the existence and the uniqueness of the sequence~$\{g_i\}_{i\in\mathbb N}$. In fact, $g_i$ are all diagonal with respect to $\{e_1,e_2,e_3\}$. Thus:
\begin{align*}
g_i(e_k,
e_l)
=x_k^{(i)}\delta_{kl},
\q k\in\{1,2,3\}, \;
i\in\NN,
\end{align*}
with $x_k^{(i)}>0$. Set
 $$
\a_{kl}^{(i)}:=x_k^{(i)}/{x_l^{(i)}}.
 $$
Lemma~\ref{lem_monot} implies
that the sequence
$
\big\{\a_{kl}^{(i)}\big\}_{i\in\mathbb N}$ is monotone
for every $k,l$ and
that
$$
\min\{\a_{kl}^{(1)},1\}
\le\a_{kl}^{(i)}
\le
\max \{\a_{kl}^{(1)},1\}
$$
for all $k,l$ and all $i$.
Thus, this sequence converges to some $\alpha_{kl}>0$.
By~\eqref{eq_Ricci},
\begin{align}\lb{alphaklEq}
\a_{kl}^{(i)}
=
\frac{x_k^{(i)}}{x_l^{(i)}}
=
\frac{x_k^{(i+1)}}{x_l^{(i+1)}}
\cdot
\frac{x_{\{k,l\}^c}^{(i+1)}+x_k^{(i+1)}-x_l^{(i+1)}}
{x_{\{k,l\}^c}^{(i+1)}-x_k^{(i+1)}+x_l^{(i+1)}}
=
\a_{kl}^{(i+1)}
\frac{
\a_{\{k,l\}^ck}^{(i+1)}+1-\a_{lk}^{(i+1)}
}
{
\a_{\{k,l\}^ck}^{(i+1)}-1+\a_{lk}^{(i+1)}
},
\qquad i\in\mathbb N.
\end{align}
Passing to the limit,
\begin{align*}
\alpha_{kl}
\big(\a_{\{k,l\}^ck}-1+\a_{lk}
\big)
=\alpha_{kl}
\big(\a_{\{k,l\}^ck}+1-\a_{lk}\big),
\end{align*}
whence $\alpha_{lk}=1$. By~\eqref{eq_Ricci},
\begin{align}
\begin{split}
x_k^{(i)}
&=
2\frac{\big(x_k^{(i+1)}+x^{(i+1)}_{k+2}-x^{(i+1)}_{k+1}\big)\big(x^{(i+1)}_k+x^{(i+1)}_{k+1}-x^{(i+1)}_{k+2}\big)}{x^{(i+1)}_{k+1}x^{(i+1)}_{k+2}}
\lb{xkiEq}\cr
&=2\big(\a^{(i+1)}_{kk+1}+\a^{(i+1)}_{k+2k+1}-1\big)
\big(\a^{(i+1)}_{kk+2}+\a^{(i+1)}_{k+1k+2}-1\big).
\end{split}
\end{align}
Passing to the limit again,
\begin{align*}
\lim_{i\to\infty} x_k^{(i)}=2,
\end{align*}
so $\{g_i\}_{i\in\mathbb N}$ converges to a round metric on $\Sph^3$.

\subsection{Classification of ancient Ricci iterations}
\label{sec_pf_ancient}

Recall that a Berger metric is a metric as in \eqref{diag}, with respect to some  basis satisfying~\eqref{Lie}, and with $(x_1,x_2,x_3)=(\nu,2,2)$.  If we let $\mathcal B$ be the set of all such bases, then we denote by $g^\nu_D$ the Berger metric with respect to the basis $D\in\mathcal B$. Notice that one obtains a round metric for $\nu=2$.

In order to prove Theorem A (b), we can assume that the ancient Ricci iteration $g_{i-1}=\Ric g_{i}$ for $g_i\in\calM(\SU(2))$ has the property that all $g_i$ are diagonal with respect to a fixed basis $\{e_1,e_2,e_3\}$.

\begin{lemma}\label{Berger}
The Berger metric $g^\nu_D$ admits an ancient Ricci iteration if and only if
$\nu\in(0,2]$.
\end{lemma}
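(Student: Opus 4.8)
The plan is to reduce the backward iteration on Berger metrics to a one-dimensional dynamical system in the ratio $\nu = x_1/x_2 = x_1/x_3$, and then analyze that system. First I would observe that the Ricci operator preserves the Berger family: if $g = g^\nu_D$ with $(x_1,x_2,x_3) = (\nu t, 2t, 2t)$ (allowing an overall scale $t>0$), then by \eqref{eq_Ricci} one computes $r_1 = \tfrac{2\nu^2 t^2}{4t^2} = \tfrac{\nu^2}{2}$, while $r_2 = r_3 = \tfrac{2(4t^2 - (\nu t - 2t)^2)}{2t\cdot \nu t} = \tfrac{4 - (\nu-2)^2}{\nu} = \tfrac{(4-\nu)\nu}{\nu} \cdot \tfrac{\dots}{\dots}$ — in any case $\Ric g$ is again of Berger type with respect to the same basis, with new ratio
\[
\widehat{\nu} = \frac{r_1}{r_2} = \frac{\nu^2/2}{\bigl(4-(\nu-2)^2\bigr)/\nu} = \frac{\nu^3}{2(4\nu - \nu^2)} = \frac{\nu^2}{2(4-\nu)},
\]
valid precisely when $r_2 \ne 0$, i.e. $\nu \ne 4$ (and $r_2 > 0$ iff $0 < \nu < 4$). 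So the question "does $g^\nu_D$ admit an ancient Ricci iteration" becomes: can one extend the map $F(\nu) = \dfrac{\nu^2}{2(4-\nu)}$ backward infinitely often, staying in the region $(0,4)$ where the resulting metrics are positive definite?

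Next I would analyze $F$ on $(0,4)$. One checks $F(1) = 1/6$, $F(2) = 2$, so $\nu = 2$ is the fixed point corresponding to the round metric, and $F$ is increasing on $(0,4)$ with $F(\nu) < \nu$ for $\nu \in (0,2)$ and $F(\nu) > \nu$ for $\nu \in (2,4)$; also $F(\nu)\to +\infty$ as $\nu \to 4^-$. Thus forward iteration from any $\nu_0 \in (0,2)$ decreases monotonically toward $0$, from $\nu_0 = 2$ is constant, and from $\nu_0 \in (2,4)$ increases and eventually leaves $(0,4)$ (after which $r_2 \le 0$ and the metric fails to be positive definite — no further iteration). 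For the \emph{backward} problem: given $g_1 = g^\nu_D$, I need a backward orbit $\dots, \nu_{-2}, \nu_{-1}, \nu_0 = \nu$ with each $\nu_{-k} \in (0,4)$ and $F(\nu_{-k}) = \nu_{-k+1}$. Since $F$ maps $(0,2)$ bijectively onto $(0,2)$ and $(2,4)$ onto $(2,\infty)$, the preimage $F^{-1}(\nu)$ lies in $(0,4)$ iff $\nu \in (0,2]$: for $\nu \in (0,2)$ the unique preimage in $(0,2)$ is again in $(0,2)$, so one can iterate backward forever staying in $(0,2)$; for $\nu = 2$ the constant sequence works; for $\nu > 2$, the preimage would have to be $> 2$ but $F((2,4)) = (2,\infty)$, so a preimage in $(0,4)$ exists only if $\nu < \infty$ — wait, it does exist, in $(2,4)$ — so I must be more careful here.

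Let me reconsider the $\nu \in (2,4)$ case, which I expect to be the main obstacle. The preimage of $\nu \in (2,4)$ under $F$ restricted to $(2,4)$ is some $\nu' \in (2,4)$ with $\nu' < \nu$ (since $F$ pushes points in $(2,4)$ outward). Iterating backward, $\nu_{-k}$ is a decreasing sequence in $(2,4)$, bounded below by $2$, hence converging to a fixed point of $F$ in $[2,4)$ — namely $2$. So a backward orbit \emph{does} exist set-theoretically, and the lemma statement $\nu \in (0,2]$ would be \emph{false} — unless positive-definiteness of \emph{all} the metrics, not just the ratios, fails. I therefore need to track the overall scale. Using \eqref{eq_Ricci} with $(x_1,x_2,x_3) = (\nu t, 2t, 2t)$: the new metric has $x_1^{\mathrm{new}} = r_1 = \nu^2/2$ and $x_2^{\mathrm{new}} = r_2 = (4 - (\nu-2)^2)/\nu$, so the scale factor $t^{\mathrm{new}}$ is determined. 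The correct claim must be that for $\nu \in (2,4)$ the backward scales $t_{-k}$ degenerate (blow up or collapse) — specifically, as $\nu_{-k} \to 2$ one should check whether $\sum$ of the scale changes forces $x_1^{(-k)} \to 0$ or $\to\infty$, violating the requirement that these be genuine metrics, OR, more likely, the resolution is that the backward orbit in $(2,4)$ forces one of the earlier iterates to have been $\ge 4$. Hence the key computation I would carry out carefully: solve $F(\nu') = \nu$ and show that for $\nu$ slightly less than $4$ the unique preimage $\nu'$ in $(0,4)$ actually falls in $(0,2)$... no. The cleanest route: I would show directly that the forward orbit of \emph{every} $\nu_0 \in (2,4)$ escapes $(0,4)$ in finitely many steps (monotone increasing, no fixed point in $(2,4)$, $F(\nu)\to\infty$ at $4^-$, so it exceeds $4$ eventually), which means $\nu_0 \in (2,4)$ is not in the \emph{image} of $F^{\circ m}$ restricted to metrics for large $m$; equivalently, walking backward from such a $\nu_0$ you are forced — after finitely many steps — into the complement of $(0,4)$, contradiction. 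Combined with: $\nu_0 \ge 4$ already fails ($\Ric$ not positive definite), and $\nu_0 \in (0,2]$ admits the honest backward orbit constructed above, this gives the lemma. The one subtlety to nail down — the main obstacle — is ruling out a backward orbit that stays in $(2,4)$ forever converging to $2$; this is handled by noting that $F'(2) > 1$ (compute: $F'(\nu) = \dfrac{\nu(8-\nu)}{2(4-\nu)^2}$, so $F'(2) = \dfrac{12}{8} = \tfrac32 > 1$), so $2$ is a \emph{repelling} fixed point from the right, and a backward orbit can only converge to $2$ from the $(0,2)$ side where $F'(2)>1$ makes $2$ attracting-from-below for $F^{-1}$; on the $(2,4)$ side $F^{-1}$ is a contraction toward $2$, so such backward orbits \emph{do} exist — meaning I have the wrong normalization and should instead observe that positive-definiteness must fail. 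I would resolve this last point by carefully re-examining \eqref{eq_Ricci}: when $\nu \in (2,4)$, $r_2 = r_3 = (4-(\nu-2)^2)/\nu > 0$ but one should check the \emph{next} preimage condition; ultimately I expect the honest statement is exactly as in the lemma and the resolution is that $F^{-1}$ of $(2,4)$ meets $(0,4)$ only where I have miscalculated a sign, so my final step is to recompute $F$ from \eqref{eq_Ricci} with extreme care and confirm that $F^{-1}((2,4)) \cap (0,4) = \emptyset$, which would immediately finish the proof.
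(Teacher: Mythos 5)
Your idea of reducing to a one-dimensional dynamical system on the Berger parameter is exactly right, and is in spirit the same thing the paper does by tracking the components $x_1^{(i)}$ and $x_2^{(i)}=x_3^{(i)}$. However, the proposal contains two errors that derail the argument and generate all of the waffling at the end. The first is a sign/direction error: an ancient Ricci iteration is a sequence $g_1,g_0,g_{-1},\dots$ with $g_{i-1}=\Ric g_i$, so one passes from $g_1$ to $g_0$ by \emph{applying} the Ricci operator, then again to get $g_{-1}$, and so on. In your notation this means the parameters satisfy $\nu_{-k}=F(\nu_{-k+1})$, i.e.\ the sequence $\nu_0,\nu_{-1},\nu_{-2},\dots$ is a \emph{forward} orbit of $F$ starting at $\nu_0$, not a backward orbit. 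You wrote the opposite, $F(\nu_{-k})=\nu_{-k+1}$, and then spent the rest of the proposal trying to reconcile the (correct) observation that $F^{-1}$ maps $(2,4)$ into $(2,4)$ with the statement of the lemma; that tension disappears once the direction is fixed. The second error is in the formula for $F$: from \eqref{eq_Ricci} with $(x_1,x_2,x_3)=(\nu,2,2)$ one gets $r_1=\nu^2/2$ and $r_2=r_3=4-\nu$, so after rescaling to put the new metric in the form $(\nu',2,2)$ one finds $\nu'=2r_1/r_2=\nu^2/(4-\nu)$, not $\nu^2/\bigl(2(4-\nu)\bigr)$. Your written formula gives $F(2)=1$, contradicting your own assertion that $2$ is a fixed point; the discrepancy is because $r_1/r_2$ is the new ratio $x_1/x_2$, which is half the new Berger parameter. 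Finally, the overall scale $t$ is a red herring: $\Ric$ is scale-invariant, so positive-definiteness of each $g_{1-k}$ depends only on $\nu_{-k+1}\in(0,4)$, and no degeneration of scales needs to be tracked.

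With these corrections the proof is short and is exactly the content of your first few sentences about forward iteration. With $F(\nu)=\nu^2/(4-\nu)$ on $(0,4)$: $F$ fixes $0$ and $2$, satisfies $F(\nu)<\nu$ on $(0,2)$ and $F(\nu)>\nu$ on $(2,4)$, and $F(\nu)\to\infty$ as $\nu\to4^-$. Hence for $\nu\in(0,2]$ the forward orbit stays in $(0,2]$ (decreasing to $0$, or constant at $2$), so every $\Ric\cdots\Ric g_1$ is positive definite and the ancient iteration exists. For $\nu\in(2,4)$ the forward orbit is strictly increasing with no fixed point in $(2,4)$, so it must exceed $4$ after finitely many steps, at which point $r_2=4-\nu_{-k}\le0$ and positive-definiteness fails; for $\nu\ge4$ one already has $r_2\le0$. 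This is essentially the same structure as the paper's proof: the paper's inductive bound $0<x_1^{(i)}<2\le x_2^{(i)}$ is the statement that forward orbits of $(0,2)$ stay in $(0,2)$, and the paper's limiting argument via Lemma~\ref{lem_monot} for $\nu>2$ plays the role of ``no fixed point in $(2,4)$, so the orbit escapes,'' culminating in the same contradiction $x_2^{(i)}=4-2\alpha_{12}^{(i+1)}<0$.
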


\begin{proof}
Let $g_1=g^\nu_D$. First, suppose $\nu\in(0,2)$. It suffices to show that
\begin{equation}
\begin{aligned}
\label{xkiposEq}
x_k^{(i)}>0 \h{\ \ for all $k\in\{1,2,3\}$ and $i\le 1$}.
\end{aligned}
\end{equation}
Assume by induction that
$0<x_1^{(j)}<2\le
x_2^{(j)}=x_3^{(j)}$
for all $j\in\{1,0,\ldots,i+1\}$.
We claim this holds also for $j=i$, which
then, of course, implies  \eqref{xkiposEq}.
Indeed, by~\eqref{eq_Ricci},
\begin{equation*}
\begin{aligned}
\label{xii1Eq}
x_1^{(i)}=2\big(x_1^{(i+1)}/x_2^{(i+1)}\big)^2,
\q
x_2^{(i)}=x_3^{(i)}
=
2\big(2x_2^{(i+1)}-x_1^{(i+1)}\big)/x_2^{(i+1)}.
\end{aligned}
\end{equation*}
Evidently then $x_1^{(i)}>0$ but also
by induction we see that $x_1^{(i)}<2<
x_2^{(i)}=x_3^{(i)}$.

Next, we assume that $\nu>2$ and  that $g^\nu_D$ admits an ancient Ricci iteration.
Then
the argument of the previous paragraph shows that
$0<x_2^{(i)}=x_3^{(i)}\le2<
x_1^{(i)}
$
for all $i\le 1$.
So, $\a^{(i)}_{23}=\a_{23}=1$. Also,
$\a^{(i)}_{21}=\a^{(i)}_{31}<1$, and
by Lemma~\ref{lem_monot} the sequences $\{\a^{(i)}_{21}\}_{i\in\mathbb N}$ and $\{\a^{(i)}_{31}\}_{i\in\mathbb N}$
are monotone decreasing. The limits of these sequences are in~$[0,1)$.
Suppose $\a_{21}>0$.
Passing to the limit in~\eqref{alphaklEq}, we obtain
$\a_{21}
=
\a_{21}
\frac{
\a_{32}+1-\a_{12}
}
{
\a_{32}-1+\a_{12}
},
$
which implies that $\a_{12}=1$. Hence $\a_{21}=1$, a contradiction. We conclude that
$\a_{21}=0$ and
$\lim_{i\ra-\infty}\a_{12}^{(i)}=\infty$. However, going back to
 \eqref{xii1Eq}, we get
 \begin{equation}
 \begin{aligned}
 \label{negEq}
 x_2^{(i)}=x_3^{(i)}
=
2(2x_2^{(i+1)}-x_1^{(i+1)})/x_2^{(i+1)}
=4-2\a^{(i+1)}_{12}.
\end{aligned}
 \end{equation}
This quantity is negative for $i$ sufficiently close to $-\infty$. Since $x_2^{(i)}$ is the component of a Riemannian metric, we obtain a contradiction.
\end{proof}

If $g_1=g^\nu_D$ with $\nu\in(0,2)$,
the arguments from the proof of Lemma~\ref{Berger} show that
$\a_{12}=0$. Similarly, $\a_{13}=0$.
Going back to~\eqref{xkiEq},
 $$
 x_1^{(i)}
 =2\big(\a^{(i+1)}_{12}+\a^{(i+1)}_{32}-1\big)
 \big(\a^{(i+1)}_{13}+\a^{(i+1)}_{23}-1\big)
 =2\a^{(i+1)}_{12}\a^{(i+1)}_{13}
$$
yields
$\lim_{i\ra-\infty} x_1^{(i)}
=0$. Going back to  \eqref{negEq}, we get
$$
\lim_{i\ra-\infty}
x_2^{(i)}
=
\lim_{i\ra-\infty}
x_3^{(i)}
=
4.
$$
Thus, the $\mathbb S^1$ fibers of the Hopf
fibration collapse, and $g_i$ converge in the Gromov--Hausdorff topology to a round metric on $\Sph^2$.

The next lemma completes the classification of homogeneous ancient Ricci iterations.

\begin{lemma}\label{lastLemma}
The metric $g\in\mathcal M$ admits an ancient Ricci iteration if and only if
$g=cg^\nu_D$ for some $c>0$, $\nu\in(0,2]$ and $D\in\mathcal B$.
\end{lemma}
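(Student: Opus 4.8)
The plan is to reduce the general case to the Berger case handled in Lemma~\ref{Berger}, by showing that any metric $g\in\calM$ admitting an ancient Ricci iteration must in fact be a multiple of a Berger metric. So suppose $g_1=g$ is diagonal with respect to a fixed basis $\{e_1,e_2,e_3\}$ satisfying \eqref{Lie}, with $g_i(e_k,e_l)=x_k^{(i)}\delta_{kl}$, and assume all $x_k^{(i)}>0$ for $i\le1$. After relabeling we may assume $x_1^{(1)}\le x_2^{(1)}\le x_3^{(1)}$. The first step is to analyze the three ratio sequences $\a^{(i)}_{kl}=x_k^{(i)}/x_l^{(i)}$ as $i\to-\infty$ using Lemma~\ref{lem_monot}: since $\a^{(1)}_{21}\ge1$, $\a^{(1)}_{31}\ge1$, $\a^{(1)}_{32}\ge1$, monotonicity in the \emph{backward} direction (i.e., running \eqref{alphaklEq} from $i+1$ to $i$) forces each of these sequences to be monotone as $i$ decreases, bounded between $1$ and its value at $i=1$; hence each converges to a limit $\a_{kl}\ge1$ as $i\to-\infty$.

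The second step is the limiting fixed-point analysis, exactly parallel to Sections~\ref{sec_pf_conv} and~\ref{sec_pf_ancient}. Passing to the limit in \eqref{alphaklEq}, for each pair $(k,l)$ with $\a_{kl}>0$ one obtains $\a_{\{k,l\}^ck}-1+\a_{lk}=\a_{\{k,l\}^ck}+1-\a_{lk}$, i.e. $\a_{lk}=1$. The key point is to show this forces all but possibly one of the $x_k^{(i)}$ to have equal limiting ratios. Concretely: if $\a_{21}>0$ then $\a_{12}=1$, but $\a_{12}\le1\le\a_{21}$ and $\a_{21}=1/\a_{12}$ give $\a_{21}=1$; iterating this observation, the only way to avoid all ratios equaling $1$ (which would make the limit round, the $\nu=2$ Berger case) is to have some ratio limit be $+\infty$, say $\a^{(i)}_{31}\to\infty$ or $\a^{(i)}_{21}\to\infty$, i.e. $x_1^{(i)}/x_2^{(i)},\,x_1^{(i)}/x_3^{(i)}\to 0$. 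So the generic alternative to a round limit is: $x_1$ shrinks relative to the other two. Then one must check $\a_{23}=1$, i.e. $x_2^{(i)}/x_3^{(i)}\to1$: indeed $\a_{23}>0$ already holds (it lies in $[1,\a^{(1)}_{23}]$), so the fixed-point relation gives $\a_{32}=1$ hence $\a_{23}=1$. Thus $x_2^{(i)}-x_3^{(i)}\to0$ in the limit, and the remaining task is to upgrade this to $x_2^{(i)}=x_3^{(i)}$ for all $i$.

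The third and crucial step is this rigidity: if $x_2^{(i_0)}\ne x_3^{(i_0)}$ for some $i_0\le1$, derive a contradiction. Here I would combine \eqref{eq_Ricci} with the established asymptotics $x_1^{(i)}\to0$ relative to $x_2^{(i)},x_3^{(i)}$. From \eqref{eq_Ricci}, $x_2^{(i)}=\dfrac{2\big(x_2^{(i+1)2}-(x_3^{(i+1)}-x_1^{(i+1)})^2\big)}{x_1^{(i+1)}x_3^{(i+1)}}$ and symmetrically for $x_3^{(i)}$; subtracting, the difference $x_2^{(i)}-x_3^{(i)}$ is controlled by $x_2^{(i+1)}-x_3^{(i+1)}$ times an amplification factor that, because $x_1^{(i+1)}$ is small relative to the others, is strictly greater than $1$ going forward in time (equivalently, strictly contracting backward). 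Tracking this: a nonzero difference at time $i_0$ forces $|x_2^{(i)}-x_3^{(i)}|$ to grow without bound as $i\to-\infty$, or else forces one of $x_2^{(i)},x_3^{(i)}$ negative for $i$ near $-\infty$ — exactly the mechanism used at the end of the proof of Lemma~\ref{Berger} via \eqref{negEq}. Either way we contradict positivity of the $x_k^{(i)}$. Hence $x_2^{(i)}\equiv x_3^{(i)}$, so each $g_i$ is (a multiple of) a Berger metric, and Lemma~\ref{Berger} (together with the scaling remark, since $\Ric(cg)=\Ric g$) gives $\nu\in(0,2]$. I expect this third step — making the ``difference amplification'' estimate precise enough to force the contradiction, rather than merely suggesting $x_2,x_3$ coincide in the limit — to be the main obstacle; everything else is a direct adaptation of the monotonicity and limiting arguments already developed in Sections~\ref{sec_pf_conv}--\ref{sec_pf_ancient}. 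The converse direction is immediate from Lemma~\ref{Berger} and scale-invariance of $\Ric$.
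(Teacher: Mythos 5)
Your plan has two linked problems: the convergence of the ratio sequences is justified incorrectly, and the actual mechanism the paper uses to get convergence (and the contradiction) is missing; the speculative ``rigidity'' step you flag as the main obstacle is not needed at all.

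First, the boundedness claim in Step 1 is backwards. For the \emph{ancient} iteration $g_{i-1}=\Ric g_i$, Lemma~\ref{lem_monot} says that if $\a^{(i)}_{kl}\ge1$ then $\a^{(i-1)}_{kl}\ge\a^{(i)}_{kl}$, so as $i\to-\infty$ the ratios $\a^{(i)}_{21},\a^{(i)}_{31},\a^{(i)}_{32}$ move \emph{away} from $1$, not toward it; they are bounded \emph{below} by their values at $i=1$, not between $1$ and those values. So monotonicity alone gives no upper bound and no convergence. (In fact you implicitly concede this in Step 2 when you entertain $\a^{(i)}_{21}\to\infty$.) The same confusion shows up when you claim $\a_{23}\in[1,\a^{(1)}_{23}]$: since $x^{(1)}_2\le x^{(1)}_3$, $\a^{(1)}_{23}\le1$ and the backward sequence $\a^{(i)}_{23}$ is decreasing toward $0$, so that interval is empty unless $x^{(1)}_2=x^{(1)}_3$.

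Second, the missing ingredient is positivity of the Ricci tensor along the entire backward chain. Since $x^{(i-1)}_k=r_k(g_i)>0$ for all $i\le1$, formula \eqref{eq_Ricci} forces $x^{(i)}_{\{k,l\}^c}+x^{(i)}_k-x^{(i)}_l>0$ for every $k,l$, and dividing by $x^{(i)}_2$ gives
\[
\a^{(i)}_{32}<\a^{(i)}_{12}+1 .
\]
The left side is increasing in backward time and the right side decreasing, so both converge; moreover $\a_{32}>\a^{(1)}_{32}>1$ yields $\a_{12}>0$ (and finiteness of $\a_{21},\a_{31}$). This is the step that rules out divergence and it is what actually makes the fixed-point equation applicable. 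Once all three ratios converge to finite positive limits, passing to the limit in \eqref{alphaklEq} forces $\a_{12}=1$, which contradicts $\a_{12}<\a^{(1)}_{12}<1$ (here it matters that the paper reduces WLOG to strict inequalities $x^{(1)}_1<x^{(1)}_2<x^{(1)}_3$, since equality of any two already puts you in the Berger case). That contradiction finishes the proof. There is no need for (and no obvious way to carry out) the ``difference amplification'' rigidity argument in your Step 3: you never have to show $x^{(i)}_2\equiv x^{(i)}_3$, because the strict initial ordering plus the positivity bound already yields the contradiction directly.
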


\begin{proof}
Let $g$ be such that $g\not= cg^\nu_D$ for any $c,\nu>0$ and $D\in\mathcal B$.
Assume that there exists an ancient Ricci iteration starting with~$g=g_1$. Since $g\not= cg^\nu_D$, we can assume that
\begin{equation}
\begin{aligned}
\label{dataEq}
x_1^{(1)}<x_2^{(1)}<x_3^{(1)}.
\end{aligned}
\end{equation}

Lemma \ref{lem_monot} and \eqref{dataEq} imply that
$\a_{21}^{(i)}, \a_{31}^{(i)},$ and $ \a_{32}^{(i)}$
are all monotonically increasing.
 We claim that they all converge.
Indeed, by assumption, $x_k^{(i)}>0$ for all $i\le 1$,
so by~\eqref{eq_aux111},
$
x_{\{k,l\}^c}^{(i)}+x^{(i)}_k-x^{(i)}_l>0
$
for all $k,l$. Therefore,
$$
\a_{32}^{(i)}< \a_{12}^{(i)}+1.
$$
The right-hand side  is monotonically decreasing, while the left-hand side is monotonically increasing.
Thus, both sides converge, and since $\a_{32}>\a_{32}^{(1)}>1$,
we get $\a_{12}>0$. Also, $\a_{31}=\a_{32}/\a_{21}>0$.
However, passing to the limit
in  \eqref{alphaklEq}, we obtain
$\a_{21}
=
\a_{21}
\frac{
\a_{32}+1-\a_{12}
}
{
\a_{32}-1+\a_{12}
},
$
which gives $\a_{12}=1$. This is a contradiction, as
$\a_{12}<\a_{12}^{(1)}<1$.
\epf

\section{Four-parameter family of metrics}\label{sec_4n3}

We now discuss the homogeneous metrics on $\Sph^{4n+3}=\Sp(n+1)/\Sp(n)$. The gauge group $\Sp(1)=N(H)/H$ acts on the vertical space $V$ as $\SO(3)$ via the twofold cover $\Sp(1)\to \SO(3)$
 \cite[p. 353]{WZ82}. It thus acts transitively on the set of (oriented) bases orthonormal with respect to the metric on $V$ induced by the round metric $\hat g$ of curvature~1.
Hence, given any $\Sp(n+1)$-invariant metric $g$, we can assume that there exists a basis $\{e_1,e_2,e_3\}$ of $V$ satisfying~\eqref{Lie} in which $g$ is diagonal, i.e.,
$$
g(e_i,e_j)=x_i\delta_{ij},\quad g_{|H}=s\,\hat g\quad \textrm{and} \quad g(e_i,H)=0
$$
for some positive constants $x_1,x_2,x_3,s$.  We denote this metric by $g=g_{(x_1,x_2,x_3,s)}$.

The Ricci curvature of $g$ satisfies, for all $u\in H$,
\begin{align*}
\Ric g(e_i,e_j)
&=\bigg(4n\frac{x_i^2}{s^2}+2\frac{x_i^2-(x_{i+1}-x_{i+2})^2}{x_{i+1}x_{i+2}}\bigg)\delta_{ij},
\\ \Ric g(u,u)&=\Big(4n+8-2\frac{x_1+x_2+x_3}s\Big)g(u,u), \\
\Ric g (e_i,u)&=0,
\end{align*}
and is thus again diagonal with respect to the same basis.

We now study the question of prescribing the Ricci curvature. Let $T$ be a metric invariant under $\Sp(n+1)$. We want to solve $\Ric g=\kappa T$, $\kappa >0$, for a homogeneous metric $g$. Assuming $T$ is diagonal in some basis $\{e_1,e_2,e_3\}$ of $V$, we set
\begin{align}\label{T_4comp}
T(e_i,e_j)=T_i\delta_{ij},\quad T_{|H}=b\,\hat g.
\end{align}

We find the following sufficient condition:

\begin{theorem}\label{thm_PRC_4n3}
Assume the $\Sp(n+1)$-invariant metric $T$ on $\Sph^{4n+3}$ satisfies
\begin{align}\label{eq_suff_cond}
\frac b{T_i}<2n+4, \quad i=1,2,3.
\end{align}
Then there exists an $\Sp(n+1)$-invariant metric $g$  such that $\Ric g=\kappa T$ for some $\kappa>0$.
\end{theorem}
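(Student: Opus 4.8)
The plan is to argue variationally, as the remark following Theorem~\ref{SO3PRC} suggests. Write $g=g_{(x_1,x_2,x_3,s)}$ and fix a constant $c_0>0$. By the variational characterization of prescribed-Ricci metrics (\cite[Lemma~2.1]{AP16}), a homogeneous metric $g$ satisfies $\Ric g=\kappa T$ for some $\kappa\in\RR$ precisely when $g$ is a critical point of the scalar curvature functional $S$ restricted to $\mathcal M_T:=\{h\in\calM:\tr_hT=c_0\}$, and then $\kappa=S(g)/c_0$; moreover an off-diagonal variation of a diagonal metric has vanishing differential of both $S$ and $\tr_{(\cdot)}T$, so it suffices to work in the diagonal slice. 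It is thus enough to produce a critical point of $S$ on $\mathcal M_T$ at which $S>0$, which I will do by showing that $S|_{\mathcal M_T}$ attains its maximum at an interior metric. From the Ricci formulas above one computes
\[
S(g)=S_{\SU(2)}\big(g_{(x_1,x_2,x_3)}\big)+\frac{4n(4n+8)}{s}-\frac{4n(x_1+x_2+x_3)}{s^2},\qquad \tr_gT=\sum_{i=1}^3\frac{T_i}{x_i}+\frac{4nb}{s},
\]
where $S_{\SU(2)}\big(g_{(x_1,x_2,x_3)}\big)=2\big(2(x_1x_2{+}x_2x_3{+}x_3x_1)-(x_1^2{+}x_2^2{+}x_3^2)\big)/(x_1x_2x_3)$ is the scalar curvature of the associated left-invariant metric on $\SU(2)$.

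On $\mathcal M_T$ the constraint forces $x_i\ge T_i/c_0$ and $s\ge 4nb/c_0$, so a sequence leaves every compact subset of $\mathcal M_T$ only if some $x_i\to\infty$ or $s\to\infty$. The core of the argument is a case analysis of $S$ along such sequences: whenever some $x_i\to\infty$, one finds $S\to-\infty$ (the negative part of $S_{\SU(2)}$ and the term $-4n(x_1+x_2+x_3)/s^2$ together dominate); and if every $x_i$ stays bounded and $s\to\infty$, then $S\to S_{\SU(2)}(\bar g)$ where $\bar g=g_{(\bar x_1,\bar x_2,\bar x_3)}$ is the limiting left-invariant metric on $\SU(2)$, constrained by $\sum_iT_i/\bar x_i=c_0$. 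By the global-maximality statement for the $\SU(2)$ problem (\cite{AP18}; cf.\ Theorem~\ref{SO3PRC}), $S_{\SU(2)}(\bar g)\le\tilde\kappa c_0$, where $\bar g^*$ is the Hamilton metric on $\SU(2)$ with $\Ric\bar g^*=\tilde\kappa\,\diag(T_1,T_2,T_3)$; here $\tilde\kappa>0$ because a left-invariant metric on $\SU(2)$ always has a positive Ricci eigenvalue. Hence $S|_{\mathcal M_T}$ is bounded above, and its supremum is attained in the interior as soon as it exceeds $\tilde\kappa c_0$.

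It therefore remains to exhibit one interior metric with $S>\tilde\kappa c_0$, and this is exactly where the hypothesis $b/T_i<2n+4$ enters. Normalize $\bar g^*$ so that $\sum_iT_i/\bar x_i^*=c_0$, whence $S_{\SU(2)}(\bar g^*)=\tr_{\bar g^*}\Ric\bar g^*=\tilde\kappa c_0$. For small $\eps>0$ put $g_\eps:=\mu_\eps\,g_{(\bar x_1^*,\bar x_2^*,\bar x_3^*,\,1/\eps)}$ with $\mu_\eps=1+4nb\eps/c_0$, chosen so that $g_\eps\in\mathcal M_T$. A short expansion gives
\[
S(g_\eps)=\frac{1}{\mu_\eps}\Big(\tilde\kappa c_0+4n(4n+8)\eps-4n(\bar x_1^*{+}\bar x_2^*{+}\bar x_3^*)\eps^2\Big)=\tilde\kappa c_0+4n\eps\big(4n+8-\tilde\kappa b\big)+O(\eps^2),
\]
so $S(g_\eps)>\tilde\kappa c_0$ for small $\eps$ provided $\tilde\kappa b<4n+8$. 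To verify this bound on $\tilde\kappa$, note that $\Ric\bar g^*=\tilde\kappa\,\diag(T_i)$ is nondegenerate, so Lemma~\ref{lem_monot} applies to $\bar g^*$ and shows that the ordering of $\bar x_1^*,\bar x_2^*,\bar x_3^*$ agrees with that of $T_1,T_2,T_3$. Ordering the indices so that $T_1=\min_iT_i$ and $\bar x_2^*\le\bar x_3^*$, we get $(\bar x_1^*)^2\le(\bar x_2^*)^2\le(\bar x_2^*)^2-\bar x_2^*\bar x_3^*+(\bar x_3^*)^2$; substituting into $\tilde\kappa T_1=2\big((\bar x_1^*)^2-(\bar x_2^*-\bar x_3^*)^2\big)/(\bar x_2^*\bar x_3^*)$ yields $\tilde\kappa\le 2/\min_iT_i$. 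Since the hypothesis says $b<(2n+4)\min_iT_i$, we obtain $\tilde\kappa\le 2/\min_iT_i<(4n+8)/b$, i.e.\ $\tilde\kappa b<4n+8$. Consequently $S|_{\mathcal M_T}$ attains its maximum at an interior metric $g$, which by the variational characterization satisfies $\Ric g=\kappa T$ with $\kappa=S(g)/c_0\ge S(g_\eps)/c_0>\tilde\kappa>0$.

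The main obstacle is the second step: showing that $S|_{\mathcal M_T}$ is bounded above and identifying $s\to\infty$ (with the $x_i$ bounded) as the only ``non-coercive'' direction, along which $S$ degenerates to the $\SU(2)$-fibre scalar curvature, whose constrained maximum is governed by Theorem~\ref{SO3PRC}. Making this rigorous requires a careful case analysis of all the ways $(x_1,x_2,x_3,s)$ can tend to infinity on $\mathcal M_T$, including cases where several coordinates are unbounded at different rates; one must also double-check the signs in the $O(\eps)$ expansion of $S(g_\eps)$ and that the Hamilton metric $\bar g^*$ can be taken diagonal in the same basis that diagonalizes $T$.
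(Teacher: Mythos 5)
Your approach is genuinely different from the paper's: you argue variationally, maximizing the scalar curvature functional $S$ on the hypersurface $\mathcal M_T=\{h\in\mathcal M:\tr_hT=c_0\}$ and invoking the characterization of the maximizer as a solution of $\Ric g=\kappa T$; the paper instead sets up a system of equations depending on a parameter $\lambda\in[0,4n]$ that deforms the $\SU(2)$ prescribed-Ricci system ($\lambda=0$) to the $\Sph^{4n+3}$ system ($\lambda=4n$), proves an a priori compactness estimate for the solution set, and applies Brouwer degree. Both proofs lean on Hamilton's Theorem~\ref{SO3PRC} at the crucial point: the paper uses it to anchor the homotopy at $\lambda=0$ and to obtain the $c<4n+8$ bound (Lemma~\ref{bound}); you use it to control the limit of $S$ along the degenerating direction $s\to\infty$ and to bound $\tilde\kappa$. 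Your computation of $S(g_\eps)$ and the estimate $\tilde\kappa\le 2/\min_iT_i$ via Lemma~\ref{lem_monot} are correct, and the test metric $g_\eps$ showing $\sup S>\tilde\kappa c_0$ under the hypothesis is a nice observation.

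However, your coercivity claim has a genuine gap, which you yourself flag. The assertion ``whenever some $x_i\to\infty$, one finds $S\to-\infty$'' is false on $\mathcal M_T$: the constraint only forces $x_i\ge T_i/c_0$ and $s\ge 4nb/c_0$, so it is possible that, say, $x_1,x_2\to\infty$ with $s\to\infty$ at comparable rates. Then $-4n(x_1+x_2+x_3)/s^2$ need not diverge, and $S_{\SU(2)}$ itself stays bounded (e.g.\ if $x_1=x_2\to\infty$ and $x_3\to T_3/c_0$, one has $S_{\SU(2)}\to 0$). The dichotomy ``$x_i\to\infty\Rightarrow S\to-\infty$'' versus ``$x_i$ bounded, $s\to\infty\Rightarrow S\to S_{\SU(2)}(\bar g)$'' therefore does not exhaust the escape directions. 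The argument can be repaired: along any escape with $s\to\infty$ and some $x_i\to\infty$ one still has $\limsup S\le 0<\tilde\kappa c_0$, and if $s$ stays bounded then $-4n\sum x_j/s^2\to-\infty$ dominates, so in all escape cases $\limsup S\le\tilde\kappa c_0<S(g_\eps)$. But this is precisely the ``careful case analysis of all the ways $(x_1,x_2,x_3,s)$ can tend to infinity on $\mathcal M_T$, including cases where several coordinates are unbounded at different rates'' that you identify as outstanding. Until that analysis is carried out, the claim that the supremum of $S|_{\mathcal M_T}$ is attained at an interior point is not established.

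Two smaller points worth recording. First, your formula for $S(g)$ is correct (it agrees with the Hopf-fibration formulas of Remark~\ref{rem_PRC_2} and with scale-invariance of the Ricci tensor), even though taking the displayed Ricci formula of Section~\ref{sec_4n3} literally (with $g(u,u)$ rather than $\hat g(u,u)$ on the right) would give a different, scale-inconsistent expression. Second, the issue you raise at the end — that the Hamilton metric $\bar g^*$ is diagonal in the basis diagonalizing $T$ — follows from the uniqueness (up to scale) in Theorem~\ref{SO3PRC} together with the fact that $\Ric$ of a left-invariant metric on $\SU(2)$ is simultaneously diagonalizable with the metric, so this part is unproblematic.
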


\begin{remark}
According to Remark~\ref{rem_PRC_2par}, the equation $\Ric g=\kappa g_{a,b}$ has a solution if and only if $b/a<2n+4$. The above theorem generalises the ``if'' part of this statement.
\end{remark}

\begin{proof} It is sufficient to prove the claim if $b=1$. We will prove the existence of a metric $g=g_{(x_1,x_2,x_3,s)}$, diagonal in the basis $\{e_1,e_2,e_3\}$, with Ricci curvature $\kappa T$. Since $\Ric$ is scale-invariant, we can assume that $s=1$.
Consider the following system of equations depending on a parameter $\lambda$:
\begin{align}\label{prce}
 c&=(4n+8)-2(x_1+x_2+x_3),\notag \\
 x_2x_3cT_1&=\lambda x_1^2x_2x_3+2(x_1^2-(x_2-x_3)^2),\notag \\
 x_1x_3cT_2&=\lambda x_2^2x_1x_3+2(x_2^2-(x_1-x_3)^2),  \\
 x_1x_2cT_3&=\lambda x_3^2x_2x_1+2(x_3^2-(x_1-x_2)^2).\notag
\end{align}
 If $\lambda=4n$, a solution to the system gives us the desired solution to $\Ric g=\kappa T$ with $\kappa =c$. By varying  $\lambda$  from $0$ to $4n$, we will show that \eqref{prce} has a solution for all $\lambda\in[0,4n]$ using degree theory.

Notice that if $\lambda=0$, the last three equations
\begin{align}\label{su2}
 x_2x_3cT_1&=2(x_1^2-(x_2-x_3)^2),\notag \\
 x_1x_3cT_2&=2(x_2^2-(x_1-x_3)^2), \\
 x_1x_2cT_3&=2(x_3^2-(x_1-x_2)^2) \notag
\end{align}
are the prescribed Ricci curvature equations for a left-invariant metric on $\SU(2)$. Recall that Hamilton's Theorem \ref{SO3PRC} says that we can solve equations \eqref{su2}, and that the solution $(x_1,x_2,x_3,c)$ is unique up to scaling of $(x_1,x_2,x_3)$. By examining his proof one easily sees that this solution depends differentiably on $T_i$.

In order to solve the system with $\lambda\ne0$ we first obtain the following bound:
\begin{lemma}\label{bound}
If $(x_1,x_2,x_3,c)$ solve the system \eqref{su2} and $1/T_i<2n+4$ for all $i$, then $c=c(T_1,T_2,T_3)<4n+8$.
\end{lemma}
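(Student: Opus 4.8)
The plan is to reduce Lemma~\ref{bound} to an elementary pointwise inequality for the Ricci curvature of left-invariant metrics on $\SU(2)$, so that Hamilton's Theorem~\ref{SO3PRC} is used only through the existence of a solution, which is part of the hypothesis. If $(x_1,x_2,x_3,c)$ solves \eqref{su2} with $x_i>0$, then dividing the $i$-th equation by $x_{i+1}x_{i+2}$ (indices mod $3$) gives, in the notation of \eqref{eq_Ricci},
\begin{equation*}
cT_i=r_i:=\frac{2\bigl(x_i^2-(x_{i+1}-x_{i+2})^2\bigr)}{x_{i+1}x_{i+2}},\qquad i=1,2,3 .
\end{equation*}
So it suffices to bound $r_{i_0}$ for one well-chosen index $i_0$ and then invoke the corresponding lower bound on $T_{i_0}$.

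First I would take $i_0$ with $x_{i_0}=\min\{x_1,x_2,x_3\}$. Then $x_{i_0}\le x_{i_0+1}$ and $x_{i_0}\le x_{i_0+2}$, hence $x_{i_0}^2\le x_{i_0+1}x_{i_0+2}$, and discarding the nonnegative term $(x_{i_0+1}-x_{i_0+2})^2$ yields
\begin{equation*}
r_{i_0}=\frac{2\bigl(x_{i_0}^2-(x_{i_0+1}-x_{i_0+2})^2\bigr)}{x_{i_0+1}x_{i_0+2}}\le\frac{2x_{i_0}^2}{x_{i_0+1}x_{i_0+2}}\le 2 .
\end{equation*}
Therefore $cT_{i_0}=r_{i_0}\le 2$, and since $T_{i_0}>0$ this gives $c\le 2/T_{i_0}$. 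Finally, using $1/T_{i_0}<2n+4$ — which is exactly the hypothesis \eqref{eq_suff_cond} (with $b=1$) applied to the index $i_0$ — we obtain $c\le 2/T_{i_0}<2(2n+4)=4n+8$, as claimed.

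I do not expect a genuine obstacle here: the argument is two short inequalities. The only points that need a little care are the choice of the index $i_0$ attached to the shortest basis vector — for the other two indices $r_i$ can be far larger than $2$, e.g.\ $r_3=8$ when $(x_1,x_2,x_3)=(1,2,3)$ — and the fact that the final estimate is strict, which comes for free since the hypothesis $1/T_i<2n+4$ is strict. Conceptually, the estimate says that among the three unnormalized Ricci quantities $r_1,r_2,r_3$ of an arbitrary left-invariant metric on $\SU(2)$, the one belonging to the shortest axis never exceeds its round-sphere value~$2$.
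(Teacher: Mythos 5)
Your argument is correct, and it takes a genuinely different route from the paper's. The paper divides each equation of \eqref{su2} by $x_1x_2x_3$, sums the three resulting identities, and uses the elementary inequality $a+1/a\ge 2$ to bound the weighted sum $c\sum T_i/x_i$ by $2\sum 1/x_i$; combined with $T_i>1/(2n+4)$ this yields $c<4n+8$. You instead single out the index $i_0$ of the shortest axis and prove the pointwise inequality $r_{i_0}\le 2$, observing that $x_{i_0}^2\le x_{i_0+1}x_{i_0+2}$ and that the term $(x_{i_0+1}-x_{i_0+2})^2$ only helps. This isolates a cleaner geometric fact --- the Ricci eigenvalue attached to the shortest axis of a left-invariant metric on $\SU(2)$ never exceeds its round value $2$ --- and uses the hypothesis $1/T_i<2n+4$ at a single index rather than as a weighted average. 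Both proofs are short; the paper's symmetric-sum version generalizes more readily to situations where no single axis is distinguished, while yours is more elementary and makes the role of the round-sphere normalization transparent. One stylistic note: since $cT_{i_0}=r_{i_0}$ exactly (not merely $\le$), you could equally start from the identity and bound; the strictness of the final inequality comes, as you say, purely from the strict hypothesis on $T_{i_0}$.
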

\begin{proof}
By dividing each of the equations in \eqref{su2} by $x_1x_2x_3$ and adding, we obtain:
\begin{align*}
c\sum_{i=1}^3\frac{T_i}{x_i}&=4\sum_{i=1}^3\frac1{x_i} -2\frac{x_1}{x_2x_3}-2\frac{x_2}{x_1x_3}-2\frac{x_3}{x_1x_2}\\
&=4\sum_{i=1}^3\frac1{x_i}-\frac1{x_1}\Big(\frac{x_2}{x_3}+\frac{x_3}{x_2}\Big)-\frac1{x_2}\Big(\frac{x_1}{x_3}+\frac{x_3}{x_1}\Big)
-\frac1{x_3}\Big(\frac{x_1}{x_2}+\frac{x_2}{x_1}\Big)\\
&\le 4\sum_{i=1}^3\frac1{x_i}-2\sum_{i=1}^3\frac1{x_i}=2\sum_{i=1}^3\frac1{x_i}
\end{align*}
since $a+1/a\ge 2$ for $a>0$. This implies the claim.
\end{proof}

In order to apply degree theory, we now show that the set of solutions of  \eqref{prce} with   $(x_1,x_2,x_3,c)\in (0,\infty)^4$ and $\lambda\in [0,4n]$ is compact.

\begin{lemma}\label{APEPRCE}
 There exists a bounded convex open subset $\Omega$ of $(0,\infty)^{4}$ such that for  $\lambda\in [0,4n]$,
 any solution $(x_1,x_2,x_3,c)$ of \eqref{prce} lies in $\Omega$.
\end{lemma}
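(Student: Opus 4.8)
The plan is to establish a priori bounds on any solution $(x_1,x_2,x_3,c)$ of \eqref{prce} that are uniform over $\lambda\in[0,4n]$, and then take $\Omega$ to be a suitably large convex open box containing all such solutions. I would work throughout with the normalization $b=1$ and $s=1$ already in force. The bounds I need are: (i) a positive lower bound for each $x_i$, (ii) an upper bound for each $x_i$, (iii) a positive lower bound for $c$, and (iv) an upper bound for $c$. The first equation $c=(4n+8)-2(x_1+x_2+x_3)$ is the key linking identity: it immediately shows that $c<4n+8$ for \emph{any} positive solution, and conversely that $x_1+x_2+x_3<2n+4$ once we know $c>0$, so (ii) and the upper bound in (iv) are essentially free once (iii) is in hand. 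The substantive work is therefore to bound $c$ away from $0$ and to bound each $x_i$ away from $0$.

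For the lower bound on $c$, I would sum the last three equations of \eqref{prce} after dividing by $x_1x_2x_3$, exactly as in the proof of Lemma~\ref{bound}, but now keeping the $\lambda$-term: this yields
\begin{align*}
c\sum_{i=1}^3\frac{T_i}{x_i}=\lambda(x_1+x_2+x_3)+4\sum_{i=1}^3\frac1{x_i}-2\frac{x_1}{x_2x_3}-2\frac{x_2}{x_1x_3}-2\frac{x_3}{x_1x_2}.
\end{align*}
Since $a+1/a\ge2$, the last three terms are bounded below by $-2\sum 1/x_i$, so $c\sum T_i/x_i\ge \lambda(x_1+x_2+x_3)+2\sum 1/x_i>0$; combined with $c=(4n+8)-2(x_1+x_2+x_3)$ this forces $c>0$ and also shows $c\sum T_i/x_i$ cannot be too small relative to $\sum 1/x_i$. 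To get a \emph{uniform} positive lower bound, I would argue by contradiction: if $c\to0$ along a sequence of solutions, then $x_1+x_2+x_3\to 2n+4$ stays bounded, so the $x_i$ are bounded above; if in addition all $x_i$ stay bounded below then passing to a subsequential limit gives a solution of \eqref{prce} with $c=0$, which by the first equation means $x_1+x_2+x_3=2n+4$, and one checks this is incompatible with the remaining three equations for $T_i>0$ (the left sides vanish while the right sides cannot all vanish for a nondegenerate positive triple — this is where the hypothesis $1/T_i<2n+4$, i.e.\ $T_i>1/(2n+4)$, must be used, via an estimate like the one in Lemma~\ref{bound} applied in the limit). So the remaining danger is that some $x_i\to0$.

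Thus the main obstacle is ruling out $x_i\to0$ (and simultaneously $c\to0$ or $c$ bounded away from $0$). Here I would exploit the $i$-th equation of \eqref{prce} directly: it reads $x_{i+1}x_{i+2}\,cT_i=\lambda x_i^2 x_{i+1}x_{i+2}+2(x_i^2-(x_{i+1}-x_{i+2})^2)$. If $x_i\to0$ while the other two stay bounded, the right side tends to $-2(x_{i+1}-x_{i+2})^2\le0$ in the limit, while the left side $x_{i+1}x_{i+2}cT_i$ is $\ge0$; this forces $x_{i+1}x_{i+2}c\to0$ and $(x_{i+1}-x_{i+2})^2\to0$, i.e.\ the other two collapse together or $c\to0$ or one of them also degenerates. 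Running through the cases — one $x_i\to0$, two $x_i\to0$, all three $x_i\to0$ — using the three equations together with the sum identity and the structure that each right-hand side $2(x_i^2-(x_{i+1}-x_{i+2})^2)$ is exactly the $\SU(2)$ Ricci numerator, one shows every scenario contradicts $c\sum T_i/x_i>0$ combined with $c<4n+8$ and $T_i>1/(2n+4)$. (This is morally the compactness half of Hamilton's Theorem~\ref{SO3PRC}, deformed by the harmless term $\lambda x_i^2\ge0$, which only helps the estimates.) Once all four bounds $0<\delta\le x_i\le 2n+4$ and $0<\delta'\le c\le 4n+8$ are in place with $\delta,\delta'$ depending only on $n$ and $T_1,T_2,T_3$, I take $\Omega:=(\delta/2,\,3n+4)\times(\delta/2,\,3n+4)\times(\delta/2,\,3n+4)\times(\delta'/2,\,5n+8)$, a bounded convex open subset of $(0,\infty)^4$ containing every solution, which proves the lemma.
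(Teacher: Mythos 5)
Your overall strategy---establish a priori bounds on every solution, uniform in $\lambda\in[0,4n]$, then take $\Omega$ to be a large convex box---is the same as the paper's, but two of your key steps do not hold up.

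First, the direct estimate you use to argue $c$ cannot be small has the inequality backwards. Dividing the last three equations of \eqref{prce} by $x_1x_2x_3$ and summing does give
\begin{align*}
c\sum_{i=1}^3\frac{T_i}{x_i}=\lambda(x_1+x_2+x_3)+4\sum_{i=1}^3\frac1{x_i}-2\frac{x_1}{x_2x_3}-2\frac{x_2}{x_1x_3}-2\frac{x_3}{x_1x_2},
\end{align*}
but grouping the negative terms as in Lemma~\ref{bound} shows they equal $-\sum_i\frac1{x_i}\bigl(\frac{x_j}{x_k}+\frac{x_k}{x_j}\bigr)$ with $\{j,k\}=\{1,2,3\}\setminus\{i\}$, and since each parenthesis is $\ge 2$, these terms are bounded \emph{above} by $-2\sum1/x_i$, not below. (Try $x_1=x_2=1$, $x_3=\epsilon$: the three terms sum to $-4/\epsilon-2\epsilon$, which is far below $-2\sum1/x_i=-4-2/\epsilon$.) So the resulting inequality is $c\sum T_i/x_i\le\lambda(x_1+x_2+x_3)+2\sum1/x_i$, an upper bound on $c$ rather than a lower one. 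Your conclusion ``$c\sum T_i/x_i\ge\cdots>0$'' does not follow from this computation; the positivity $c>0$ is simply part of the hypothesis that the solution lies in $(0,\infty)^4$, and a uniform lower bound on $c$ must come from the contradiction argument.

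Second, and more seriously, the case where some $x_i\to0$ while $c$ stays bounded away from $0$---the hardest part of the lemma---is only gestured at. When that happens, the paper first shows that in fact \emph{all three} $x_i$ must tend to $0$, hence $c\to4n+8$; then it rescales, setting $d=1/x_1$ and $z_i=(x_{i+1}+x_{i+2}-x_i)/x_i$, uses \eqref{PRQE} to show the $z_i$ and hence the ratios $x_i/x_j$ stay bounded, passes to a subsequence so $dx_2,dx_3$ converge, and observes that the limits $dx_i$ solve the $\SU(2)$ system \eqref{su2} with $c=4n+8$, contradicting Lemma~\ref{bound}. Your sketch (``running through the cases\dots one shows every scenario contradicts\dots'') names the conclusion but not this normalization mechanism, which is what actually lets the hypothesis $1/T_i<2n+4$ (via Lemma~\ref{bound}) be brought to bear. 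Without some version of that blow-up reduction, the degeneration $x_1,x_2,x_3\to0$ with $c\to4n+8$ is not excluded, and the proof has a genuine gap. The case $c\to0$ also needs a concrete argument: the paper passes to the limit system \eqref{yec0}, shows two of the $y_i$ must coincide, deduces $y_1=0$ and $y_2=y_3>0$, and then derives a sign contradiction from the normalized third and fourth equations of \eqref{prce}; your appeal to ``an estimate like the one in Lemma~\ref{bound} applied in the limit'' is not enough because the system at $c=0$ still contains the unknown $\mu$, and the contradiction comes from a careful structural analysis, not from Lemma~\ref{bound} itself.
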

\begin{proof}
 Assume to the contrary that no such set exists.
 Then there is a sequence of $\lambda^{(i)}\in [0,4n]$ with a corresponding
 sequence $\big(x_1^{(i)},x_2^{(i)},x_3^{(i)},c^{(i)}\big)\in (0,\infty)^{4}$ of solutions to \eqref{prce} such that one of the variables goes to $0$ or $\infty$. For the remainder of the proof, we surpress reference to $i$ to simplify notation. The first equation in~\eqref{prce} shows that no variable can go to~$\infty$. We will consider two cases, first that $c\to 0$, and second, that at least one of $x_1,x_2$ or $x_3$
 goes to $0$ and $c$ is does not converge to $0$. We show that we get a contradiction in both cases.
\vspace{5pt}

 \textbf{First} \textbf{Case}. If $c\to 0$, then passing to the limits of \eqref{prce},
 we find that $x_1\to y_1$, $x_2\to y_2$, $x_3\to y_3$ and $\lambda\to \mu$, where $y_i$ and $\mu$ are non-negative numbers solving
 \begin{align}\label{yec0}
 \begin{split}
 0&=(4n+8)-2(y_1+y_2+y_3),\\
 0&=\mu y_1^2y_2y_3+2(y_1^2-(y_2-y_3)^2),\\
 0&=\mu y_2^2y_1y_3+2(y_2^2-(y_1-y_3)^2),\\
 0&=\mu y_3^2y_1y_2+2(y_3^2-(y_1-y_2)^2).
 \end{split}
 \end{align}
 First, we claim that at least two of $y_1,y_2,y_3$ are identical. To see this, note that by
 taking differences of the last three equations of \eqref{yec0}, we find
\begin{align}\label{yec01}
\begin{split}
  0&=(y_1-y_2)\left(\mu y_1y_2y_3+4(y_1+y_2-y_3)\right),\\
  0&=(y_2-y_3)\left(\mu y_1y_2y_3+4(y_2+y_3-y_1)\right),\\
   0&=(y_3-y_1)\left(\mu y_1y_2y_3+4(y_1+y_3-y_2)\right).
  \end{split}
 \end{align}
If all of $y_1,y_2,y_3$ are distinct, then at least one of $y_1,y_2,y_3$ is positive, and \eqref{yec01} implies that
\begin{align}\label{yec02}
\begin{split}
  0&=\left(\mu y_1y_2y_3+4(y_1+y_2-y_3)\right),\\
  0&=\left(\mu y_1y_2y_3+4(y_2+y_3-y_1)\right),\\
   0&=\left(\mu y_1y_2y_3+4(y_1+y_3-y_2)\right).
  \end{split}
 \end{align}
 By adding these three equations up, we find that the non-negative numbers $y_1,y_2,y_3$ and $\mu$ satisfy $3\mu y_1y_2y_3+4(y_1+y_2+y_3)=0$, which is a contradiction since
 one of $y_1,y_2,y_3$ is positive.

We now know that at least two of $y_1,y_2,y_3$ are identical, so we assume without loss of generality that $y_2=y_3$.
Then since $\mu,y_1,y_2$ and $y_3$ are all non-negative, the second equation of \eqref{yec0} implies that $y_1=0$, and the first equation implies that $y_2=y_3>0$.
Now by dividing the third and fourth equations of \eqref{prce} by $x_1x_3$ and $x_1x_2$ respectively, we see that
 \begin{align*}
 cT_2&=\lambda x_2^2+2\Big(\frac{x_2^2}{x_1x_3}-\frac{x_1}{x_3}-\frac{x_3}{x_1}+2\Big),\\
 cT_3&=\lambda x_3^2+2\Big(\frac{x_3^2}{x_1x_2}-\frac{x_1}{x_2}-\frac{x_2}{x_1}+2\Big).
 \end{align*}
 Since $x_1\to y_1=0$, $c\to 0$, $\lambda\ge0$ and $x_2,x_3\to y_2=y_3> 0$, we deduce that eventually, both $\frac{x_2^2}{x_1x_3}-\frac{x_3}{x_1}$
 and $\frac{x_3^2}{x_1x_2}-\frac{x_2}{x_1}$ must be negative. Thus $x_2^2-x_3^2<0$ and $x_3^2-x_2^2<0$, which is a contradiction.
\vspace{5pt}

 \textbf{Second} \textbf{Case}. Now $c$ converges to some positive number,
 but at least one of $x_1,x_2,x_3$ is converging to $0$.
 To start, assume that $x_1\to 0$. The third equation of \eqref{prce} implies that $x_2-x_3\to 0$. The second equation of \eqref{prce} then implies that
 $x_2x_3c\to 0$. Since $c$ does not converge to $0$, we must have both $x_2$ and $x_3$ converging to $0$ as well as $x_1$.
 If instead of assuming $x_1\to 0$ we assume that $x_2\to 0$ or
  $x_3\to 0$, we would again conclude that all three of $x_1,x_2,x_3$ are converging to $0$.

Since $x_1,x_2,x_3\to 0$, the first equation of \eqref{prce} implies that $c\to 4n+8$.
Rewrite the second, third and fourth equations as
\begin{align}\label{PRQE}
\begin{split}
 cT_1=\lambda x_1^2+2z_2z_3,\qquad
 cT_2=\lambda x_2^2+2z_1z_3,\qquad
 cT_3=\lambda x_3^2+2z_1z_2,
\end{split}
\end{align}
where $z_1=\frac{x_2+x_3-x_1}{x_1}$, $z_2=\frac{x_1+x_3-x_2}{x_2}$, $z_3=\frac{x_1+x_2-x_3}{x_3}$, and we can assume that these numbers
change monotonically as well. For each $i=1,2,3$, $\lambda x_i^2\to 0$ and $cT_i\to (4n+8)T_i>0$, so \eqref{PRQE} implies that
all three of $z_1,z_2,z_3$ are bounded, from which we deduce that $\frac{x_i}{x_j}$ is bounded for each $i$ and $j$. Now write \eqref{PRQE} as
\begin{align}\label{prce''}
\begin{split}
 cT_1&=\lambda x_1^2+\frac{2((dx_1)^2-(dx_2-dx_3)^2)}{dx_2dx_3},\\
cT_2&=\lambda x_2^2+\frac{2((dx_2)^2-(dx_1-dx_3)^2)}{dx_1dx_3},\\
 cT_3&=\lambda x_3^2+\frac{2((dx_3)^2-(dx_1-dx_2)^2)}{dx_2dx_1},
\end{split}
\end{align}
where $d=1/x_1$ (again dropping reference to the superscript).
By taking a subsequence, we can assume that $dx_2$ and $dx_3$ are monotone. Since $\frac{x_i}{x_j}$ is a bounded sequence for each $i$ and $j$,
we know that $d x_2$ and $dx_3$ converge to some positive numbers.
Taking limits and using the fact that $c\to 4n+8$, we see that the numbers $dx_i$  converge to a solution of \eqref{su2}
with $c=4n+8$. However, this contradicts Lemma~\ref{bound}.
\end{proof}

In order to solve \eqref{prce} we rewrite the equations in terms of smooth functions $f_\lambda\colon  (0,\infty)^4\to \mathbb{R}^4$, $\lambda\in [0,4n]$, such that $$f_\lambda(x_1,x_2,x_3,c)=(4n+8,T_1,T_2,T_3):=y$$ is equivalent to \eqref{prce}. We want to show that $f_\lambda^{-1}(y)$ is nonempty for all $\lambda$, which implies our theorem when $\lambda=4n$. We first show that this holds when $\lambda=0$. Recall that we can solve the last 3 equations in \eqref{prce}, which coincide with equations \eqref{su2} when $\lambda=0$, and that the solution is unique up to scaling. Lemma \ref{bound} implies that under the assumption $1/T_i<2n+4$, $i=1,2,3$,  we can choose this scaling so that the first equation  in \eqref{prce} is satisfied as well. Thus $f_0^{-1}(y)$ consists of a single point  $p\in \Omega$. Since the solution depends differentiably on $T_i$, it follows that $f^{-1}$ is differentiable near $p$ and hence $\det(Df_0)_p\ne 0$, which implies that $deg(f_0|_{\Omega},y)=\pm 1$. Notice that by Lemma \ref{APEPRCE} $\bar\Omega$ is compact, and $y\notin f_\lambda(\partial\Omega)$. Thus by  the homotopy invariance of the Brouwer degree, it follows that $deg(f_\lambda|_{\Omega},y)\ne 0$ for all $\lambda$. This finishes the proof.
\end{proof}

The condition in Theorem~\ref{thm_PRC_4n3} is not necessary.  In fact, its proof shows that one has the following stronger statement:

\begin{theorem}\label{cond_c<4n+8} There exists an $\Sp(n+1)$-invariant metric $g$  such that $\Ric g=\kappa T$ for some $\kappa>0$ if one has
\begin{align}\label{eq_suff_cond 2}
c=c(T_1/b,T_2/b,T_3/b)<4n+8,
\end{align}
where $c(x_1,x_2,x_3)$ is defined in terms of the solution to the system~\eqref{su2}.
\end{theorem}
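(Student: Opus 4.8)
The plan is to observe that the proof of Theorem~\ref{thm_PRC_4n3} already establishes this sharper statement: in that argument, hypothesis \eqref{eq_suff_cond} is invoked \emph{only} through Lemma~\ref{bound}, and the conclusion of Lemma~\ref{bound} is exactly that $c(T_1/b,T_2/b,T_3/b)<4n+8$. So I would keep every ingredient of that proof and simply take \eqref{eq_suff_cond 2} as the standing hypothesis wherever Lemma~\ref{bound} was previously used.

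Concretely, I would first reduce to $b=1$ using scale-invariance of $\Ric$, so that the hypothesis becomes $c_0:=c(T_1,T_2,T_3)<4n+8$ and one seeks $g=g_{(x_1,x_2,x_3,1)}$ with $\Ric g=\kappa T$. I would then run the same degree-theoretic deformation \eqref{prce} in the parameter $\lambda\in[0,4n]$, with the same reformulation $f_\lambda(x_1,x_2,x_3,c)=(4n+8,T_1,T_2,T_3)=:y$. At $\lambda=0$ the last three equations of \eqref{prce} are the $\SU(2)$ system \eqref{su2}; since both sides of \eqref{su2} are homogeneous of the same degree in $(x_1,x_2,x_3)$, Hamilton's Theorem~\ref{SO3PRC} gives a solution with a scale-invariant value $c=c_0$, so $c(T_1,T_2,T_3)$ is well defined, and because $c_0<4n+8$ there is a unique positive rescaling of $(x_1,x_2,x_3)$ making $x_1+x_2+x_3=(4n+8-c_0)/2$, i.e.\ satisfying the remaining equation of \eqref{prce}. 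Thus $f_0^{-1}(y)$ is a single point $p$, and, exactly as in the proof of Theorem~\ref{thm_PRC_4n3} (using differentiable dependence of the $\SU(2)$ solution on the $T_i$), $\det(Df_0)_p\ne0$, so $deg(f_0|_\Omega,y)=\pm1$.

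Next I would re-examine Lemma~\ref{APEPRCE}: its proof uses the hypothesis exactly once, at the end of the Second Case, where the rescaled variables $dx_i$ are shown to converge to a solution of \eqref{su2} with $c=4n+8$, which is impossible since the value $c(T_1,T_2,T_3)=c_0$ is unique and strictly less than $4n+8$; the First Case and the remainder of the Second Case never use it. Hence the a priori bounds persist under \eqref{eq_suff_cond 2}, giving a bounded convex open $\Omega\subset(0,\infty)^4$ containing every solution of \eqref{prce} with $\lambda\in[0,4n]$, so $y\notin f_\lambda(\partial\Omega)$ for all such $\lambda$. Homotopy invariance of the Brouwer degree then gives $deg(f_\lambda|_\Omega,y)=\pm1\ne0$ for every $\lambda\in[0,4n]$, and $\lambda=4n$ furnishes a solution $(x_1,x_2,x_3,c)$ of \eqref{prce}, i.e.\ a metric $g$ with $\Ric g=cT$, $c>0$.

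I do not expect a substantial obstacle here; the content is purely a bookkeeping check that the stronger hypothesis $c_0<4n+8$ suffices at every step. The one place where the precise threshold $4n+8$ enters is the construction of $p$ at $\lambda=0$: positivity of the rescaling factor $(4n+8-c_0)/2$ is equivalent to $c_0<4n+8$, which is exactly why this is the natural condition. The only mild care required is to confirm that the First Case of Lemma~\ref{APEPRCE} (the $c\to0$ degeneration) makes no use of Lemma~\ref{bound}, which a quick inspection of that argument confirms.
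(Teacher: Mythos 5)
Your proposal is correct and matches the paper's own argument: the paper proves Theorem~\ref{cond_c<4n+8} by simply observing that the proof of Theorem~\ref{thm_PRC_4n3} only uses the hypothesis \eqref{eq_suff_cond} through the conclusion of Lemma~\ref{bound}, namely $c(T_1,T_2,T_3)<4n+8$, so assuming this directly yields the sharper result. Your careful verification that Lemma~\ref{bound} is invoked only in constructing $p$ at $\lambda=0$ (where $c_0<4n+8$ gives the positive rescaling) and at the end of the Second Case of Lemma~\ref{APEPRCE}, and that the First Case never touches it, is exactly the bookkeeping the authors had in mind.
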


It is conceivable that this is necessary and sufficient for the solvability of $\Ric g=\kappa T$.
It is easy to find an explicit expression for the function~$c$. In fact,
$$
c(x_1,x_2,x_3)=8\frac{x_1Z^2-x_3 }{x_1^2Z^2-x_3^2 },
$$
where $Z$ is a solution of the cubic equation
$$
x_1^2(x_2-x_3)Z^3+ x_1x_3(2x_1-x_2-x_3)Z^2+x_1x_3(2x_3-x_1-x_2)Z+x_3^2(x_2-x_1)=0.
$$

Among the four-parameter family of metrics $g_{(x_1,x_2,x_3,s)}$ there exists a special subclass with $x_2=x_3=x$. These metrics are invariant under the larger isometry group $\Sp(n+1)\U(1)$, where $\U(1)$ acts by rotation in the plane corresponding to the $x_2,x_3$ variables. If we restrict our attention to the $\Sp(n+1)\U(1)$-invariant case, we can obtain a simple formula for the function $c$ appearing in~\eqref{cond_c<4n+8}. In this case, we have the following improved version of Theorem~\ref{thm_PRC_4n3}:

\begin{theorem}\label{thm_SpU_PRC}
Assume that the metric $T$ on $\mathbb S^{4n+3}$ given by~(\ref{T_4comp}) is $\Sp(n+1)\U(1)$-invariant, i.e., $T_2=T_3$. If
\begin{align*}
bT_1+4bT_2-b\sqrt{T_1^2+8T_1T_2}<(4n+8)T_2^2,
\end{align*}
then there exists an $\Sp(n+1)\U(1)$-invariant metric $g$ such that $\Ric g=\kappa T$ for some $\kappa>0$.
\end{theorem}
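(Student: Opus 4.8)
The plan is to derive this from Theorem~\ref{cond_c<4n+8}: that result already produces a metric $g$ with $\Ric g=\kappa T$, $\kappa>0$, as soon as $c(T_1/b,T_2/b,T_3/b)<4n+8$, where $c(x_1,x_2,x_3)$ is the value of $c$ attached to the scaling-unique solution of the $\SU(2)$ system~\eqref{su2}. So the entire task is to compute $c$ explicitly when $T_2=T_3$ and to recognize the resulting inequality.

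First I would note that the solution of~\eqref{su2} for the data $(T_1,T_2,T_2)$ is itself symmetric, that is $x_2=x_3$. Indeed, interchanging the indices $2$ and $3$ leaves~\eqref{su2} invariant when $T_2=T_3$, so $(x_1,x_3,x_2,c)$ is again a solution; by the uniqueness up to scaling in Theorem~\ref{SO3PRC} it equals a positive multiple of $(x_1,x_2,x_3,c)$, and comparing first coordinates forces the multiple to be $1$.

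Next I would solve~\eqref{su2} in this symmetric regime. Setting $x_2=x_3=x$, the first equation becomes $x^2cT_1=2x_1^2$, while the second and third equations coincide and become $x_1xcT_2=2x_1(2x-x_1)$. Eliminating $x$ shows that $x_1/x$ equals both $\sqrt{cT_1/2}$ and $2-cT_2/2$; equating these and squaring yields
\[
T_2^2\,c^2-(2T_1+8T_2)\,c+16=0,
\]
with discriminant $4T_1(T_1+8T_2)$. Of the two roots, only $c=\bigl(T_1+4T_2-\sqrt{T_1^2+8T_1T_2}\bigr)/T_2^2$ is consistent with the requirement $x_1/x=2-cT_2/2>0$ coming from Theorem~\ref{SO3PRC} (the other root exceeds $4/T_2$), and this root is positive because $(T_1+4T_2)^2>T_1^2+8T_1T_2$. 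Hence $c(T_1,T_2,T_2)=\bigl(T_1+4T_2-\sqrt{T_1^2+8T_1T_2}\bigr)/T_2^2$.

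Finally, substituting $(T_1/b,T_2/b,T_3/b)$ and using the homogeneity of this expression, $c(T_1/b,T_2/b,T_2/b)=b\bigl(T_1+4T_2-\sqrt{T_1^2+8T_1T_2}\bigr)/T_2^2$. Thus the hypothesis $bT_1+4bT_2-b\sqrt{T_1^2+8T_1T_2}<(4n+8)T_2^2$ is precisely $c(T_1/b,T_2/b,T_3/b)<4n+8$, and Theorem~\ref{cond_c<4n+8} applies. The one point that needs slightly more than a direct computation is the assertion that $g$ may be taken $\Sp(n+1)\U(1)$-invariant, that is with $x_2=x_3$: this follows either from the uniqueness result in the Appendix, or by running the degree-theoretic argument in the proof of Theorem~\ref{thm_PRC_4n3} inside the invariant subfamily $\{x_2=x_3\}$, where the $\lambda=0$ solution is already symmetric by the argument above and all the a priori bounds restrict. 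I expect this invariance step to be the only genuine subtlety; the remaining steps are elementary algebra together with the branch selection in the quadratic.
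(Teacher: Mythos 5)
Your proof follows the paper's approach exactly: solve the $\SU(2)$ system~\eqref{su2} explicitly in the symmetric case $T_2=T_3$, compute the resulting $c$, and invoke Theorem~\ref{cond_c<4n+8}. The paper simply \emph{states} the solution
$$
(x_1,x_2,x_3,c)=\Big(\tfrac1{2T_2}\big(-T_1+\sqrt{T_1^2+8T_1T_2}\big)x,\,x,\,x,\,\tfrac b{T_2^2}\big(T_1+4T_2-\sqrt{T_1^2+8T_1T_2}\big)\Big)
$$
and says ``straightforward verification,'' whereas you derive it; your symmetry argument via Hamilton's uniqueness ($x_2=x_3$ for the solution of~\eqref{su2}), the quadratic $T_2^2c^2-(2T_1+8T_2)c+16=0$, the branch selection $c<4/T_2$, and the homogeneity in $b$ are all correct and match the paper's formula.

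On the final point, you are right to flag that Theorem~\ref{cond_c<4n+8} only produces an $\Sp(n+1)$-invariant $g$, not a priori one with $x_2=x_3$, and that the paper does not spell this out. However, of your two proposed remedies, only the second one works. The uniqueness result in the Appendix, Theorem~\ref{sp1}, assumes $\Ric g$ is $\Sp(n+1)\Sp(1)$-invariant, i.e.\ $T_1=T_2=T_3$; when $T_2=T_3\neq T_1$ the difference argument in~\eqref{takedifference} yields only a single equation, which does not force $x_2=x_3$ (the alternative branch $4nx_1x_2x_3+4(x_2+x_3-x_1)+ax_1=0$ is not automatically excluded). So the Appendix does not apply. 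Your second remedy does work: restrict $f_\lambda$ to the slice $\{x_2=x_3\}$, where for $T_2=T_3$ the image lands in the corresponding symmetric slice of the target; the $\lambda=0$ solution lies in this slice, the a priori bounds of Lemma~\ref{APEPRCE} carry over, and the nondegeneracy of $Df_0$ at the symmetric point restricts to the slice because the symmetry $(x_2,x_3)\mapsto(x_3,x_2)$ commutes with $f_0$. This gives a nonzero Brouwer degree for the restricted problem and hence a symmetric solution.
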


\begin{proof}
Straightforward verification shows that
$$
(x_1,x_2,x_3,c)=\Big(\frac1{2T_2}\Big(-T_1+\sqrt{T_1^2+8T_1T_2}\Big)x,x,x,\frac b{T_2^2}\Big(T_1+4T_2-\sqrt{T_1^2+8T_1T_2}\Big)\Big)
$$
is a solution to~\eqref{su2} with $T_i$ replaced by $T_i/b$ for every $x>0$. The claim follows from this observation and Theorem~\ref{cond_c<4n+8}.
\end{proof}

\begin{remark}\label{limits}
This shows that the condition \eqref{eq_suff_cond 2} is {substantially} weaker than~\eqref{eq_suff_cond}. For example, suppose that $b=1$ and $T_2=T_3>\frac1{n+2}$. In this case,
\begin{align*}
c(T_1/b,T_2/b,T_3/b)=\frac b{T_2^2}\Big(T_1+4T_2-\sqrt{T_1^2+8T_1T_2}\Big)\le\frac4{T_2}<4n+8,
\end{align*}
which means~\eqref{cond_c<4n+8} holds for all $T_1>0$. However,~\eqref{eq_suff_cond} only holds for $T_1>\frac1{2n+4}$.
\end{remark}

\begin{remark}
If an $\Sp(n+1)\U(1)$-invariant metric $g_{(x_1,x,x,s)}$ admits an ancient iteration, then the following necessary condition holds: $x_1\le x\le s$. The proof of this fact requires careful analysis of monotone quantities associated  with the iteration. 
\end{remark}

\begin{remark}
For  homogeneous metrics the Ricci flow can be interpreted as the gradient flow of the negative scalar curvature restricted to the space of metrics of volume 1. The round metric is a local maximum of this functional, and the second Einstein metric a saddle point. It has two negative eigenvalues and one positive eigenvalue whose eigenvector is $x_1=x_2=x_3$. Thus, at least near the second Einstein metric, there exist precisely two ancient solutions, already contained in the family $g_{t,s}$; see~\cite{MB14}.
\end{remark}

\section{Ricci iteration near the round metric}\label{nearround}

The general behavior of Ricci iterations among the family $g_{(x_1,x_2,x_3,s)}$ seems to be difficult to understand. But one can descibe their behavior near a round metric.

The set of homogeneous metrics on $\Sph^{4n+3}$ has a natural topology since it is determined by the inner products on the tangent space at one point, which in turn form  an open cone in the vector space of symmetric bilinear forms on the tangent space. We let $h_0$ be the round metric $g_{(1,1,1,1)}$.

\begin{theorem}
There exists a neighborhood $\mathcal O$ of the round metric $h_0$ in the space of homogeneous metrics on $\Sph^{4n+3}$  such that the following  holds:
if $g_0\in\mathcal O$, then there exists a Ricci iteration that starts with $cg_0$ for some $c>0$ and converges to a metric of constant curvature.
\end{theorem}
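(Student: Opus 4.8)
The plan is to set up the Ricci iteration as a discrete dynamical system near the fixed point given by the round metric and apply a local stability argument. First I would identify the relevant map: by Theorem~\ref{thm_PRC_4n3} (or, better, Theorem~\ref{cond_c<4n+8}), for every homogeneous $T=g_{(T_1,T_2,T_3,b)}$ sufficiently close to $h_0$ the equation $\Ric g=\kappa T$ is solvable, since at $h_0$ the quantity $c(1,1,1)$ equals the Einstein constant $4n+8-6=4n+2<4n+8$, and $c$ depends continuously on $(T_1,T_2,T_3)$. By the Appendix uniqueness result, the solution $g$ is unique up to scaling, so after normalizing (say $\tr_T g$ fixed, or $\det$ fixed on $V$ and then scaling) we obtain a well-defined map $\Phi$ on a neighborhood of $h_0$ in the projectivized space of homogeneous metrics, with $\Phi(h_0)=h_0$. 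Iterating $\Phi$ and keeping track of the scaling constants $c_i$ then produces the Ricci iteration starting from $c g_0$.

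Next I would show $\Phi$ is a contraction near $h_0$, or at least that $h_0$ is a linearly stable fixed point. The natural approach is to linearize: the derivative $d\Phi_{h_0}$ is, up to the normalization, the inverse of the linearization of the Ricci operator at the round metric. The linearization of $\Ric$ at an Einstein metric on the projectivized space of homogeneous metrics is governed by the eigenvalues of the relevant Lichnerowicz-type operator restricted to the finite-dimensional space of homogeneous symmetric $2$-tensors; these eigenvalues were effectively computed in the course of Section~\ref{sec_4n3} (they are encoded in the function $c$ and the Ricci formulas), and the point is that after projectivizing and taking the inverse, all eigenvalues of $d\Phi_{h_0}$ have modulus strictly less than $1$. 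Concretely, differentiating the four Ricci-curvature formulas at $(x_1,x_2,x_3,s)=(1,1,1,1)$ gives an explicit $4\times 4$ (or $3\times 3$ after fixing volume/scale) matrix, whose inverse one checks has spectral radius $<1$; this is the analogue of the monotonicity computations done for the two-summand case in \cite{PR} and for $\SU(2)$ in Section~\ref{sec_pf_conv}. Once $d\Phi_{h_0}$ is a linear contraction, the standard fixed-point/stable-manifold argument gives a neighborhood $\mathcal O$ on which $\Phi^i(g_0)\to h_0$, i.e.\ the (normalized) iteration converges to a metric of constant curvature; undoing the normalization shows the unnormalized iteration $g_i$ converges to a constant-curvature metric as well.

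The main obstacle I expect is twofold. First, one must verify that the solution operator $T\mapsto g$ of the prescribed Ricci equation is actually differentiable at $h_0$ (not merely that solutions exist): this follows from the implicit function theorem applied to the system in the proof of Theorem~\ref{thm_PRC_4n3}, using that the Jacobian $\det(Df_{4n})$ is nonzero at the round solution — which is exactly where one invokes that $h_0$ is a nondegenerate solution, equivalently that $d\Phi_{h_0}$ is invertible, equivalently that the linearized Ricci operator at the round metric has no kernel on the projectivized space of homogeneous metrics. This nondegeneracy is essentially the statement that the round metric is a rigid Einstein metric among homogeneous ones, which can be read off from the eigenvalue computation. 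Second, one must check the spectral radius bound for $d\Phi_{h_0}$; this is a concrete but slightly delicate linear-algebra computation, and it is the real content of the argument, playing the role that Lemma~\ref{lem_monot} played on $\SU(2)$. I would organize the proof so that these two computations — invertibility and the contraction estimate — are isolated as the key lemmas, with the rest being the standard discrete-dynamical-systems packaging.
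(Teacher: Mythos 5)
Your core idea --- treat the Ricci iteration near $h_0$ as a discrete dynamical system, linearize, and show the inverse of the linearized Ricci map is a contraction --- is exactly the paper's approach, and you correctly identify the spectral radius bound as the crux. However, the paper's execution is considerably lighter than what you propose, and one of the ingredients you plan to invoke would not actually do the job you want.

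The paper introduces the explicit map $f=(f_1,f_2,f_3)$ with
\[
f_i(x_1,x_2,x_3)=\frac{4nx_i^2+2\dfrac{(x_i+x_{i+1}-x_{i+2})(x_i+x_{i+2}-x_{i+1})}{x_{i+1}x_{i+2}}}{(4n+8)-2(x_1+x_2+x_3)},
\]
which sends (normalized) $g$ to (normalized) $T$ with $\Ric g=cT$, and then simply computes that $Df_{(1,1,1)}$ is $\bigl(2+\tfrac1{2n+1}\bigr)$ times the identity. Because the derivative is a scalar multiple of the identity with modulus $>1$, the inverse function theorem gives a local inverse $f^{-1}$ with $|Df^{-1}|<1$ on a small ball, and $x^{(k+1)}=f^{-1}(x^{(k)})$ converges to $(1,1,1)$. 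In particular the ``slightly delicate linear-algebra computation'' you anticipated is in fact trivial, and there is no need to invoke the solvability results of Section~\ref{sec_4n3} (Theorems~\ref{thm_PRC_4n3}, \ref{cond_c<4n+8}) at all: the inverse function theorem provides both existence and uniqueness locally. This also means the heavier stable-manifold packaging you suggest can be replaced by the ordinary contraction mapping principle.

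Two inaccuracies in your plan are worth flagging. First, the Appendix result (Theorem~\ref{sp1}) is \emph{not} a uniqueness theorem for $\Ric g=cT$: it says that if $T$ is $\Sp(n+1)\Sp(1)$-invariant then any $\Sp(n+1)$-invariant solution inherits the extra symmetry. It restricts the shape of solutions but does not assert there is only one, so your proposed map $\Phi$ would not be well-defined by that route; the local inverse from the inverse function theorem is what makes the map well-defined. Second, in Theorem~\ref{cond_c<4n+8} the function $c(x_1,x_2,x_3)$ is the constant coming from the $\SU(2)$ system~\eqref{su2}, and $c(1,1,1)=2$, not the Einstein constant $4n+2$ of the round $\Sph^{4n+3}$; the inequality $2<4n+8$ is of course still satisfied, so this slip does not break the argument, but the quantities should not be conflated.
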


\begin{proof}
Choose $\Omega$ to be some open subset of
$$(0,\infty)^3\setminus\{(x_1,x_2,x_3)\in(0,\infty)^3:x_1+x_2+x_3\ne2n+4\},$$
and let $f=(f_1,f_2,f_3):\Omega\to \mathbb{R}^3$ be given by
\begin{align*}
f_i(x_1,x_2,x_3)&=\frac{4nx_i^2+2\frac{(x_i+x_{i+1}-x_{i+2})(x_i+x_{i+2}-x_{i+1})}{x_{i+1}x_{i+2}}}{(4n+8)-2(x_1+x_2+x_3)}.
\end{align*}
Equations \eqref{prce} imply that if $y_i=f(x_1,x_2,x_3)$, then $\Ric g_{(x_1,x_2,x_3,1)}=c T_{(y_1,y_2,y_3,1)}$, where we have $c=(4n+8)-2(x_1+x_2+x_3)$.
 Therefore, if we have a sequence
$$\big\{x^{(k)}=\big(x^{(k)}_1,x^{(k)}_2,x^{(k)}_3\big)\big\}_{k=1}^\infty\in (0,\infty)^3$$
such that
$x^{(k)}=f(x^{(k+1)})$ for $k\ge 1$, then there exist some $c^{(k)}>0$ such that the metrics
 \begin{equation}\label{iterate}
g_k=g_{(c^{(k)}x_1^{(k)},c^{(k)}x_2^{(k)},c^{(k)}x_3^{(k)},c^{(k)})}
\end{equation} form a Ricci iteration.

Next, observe that the derivative of $f$ at $(1,1,1)$ is $2+\frac{1}{2n+1}$
times the identity. Therefore, there exist neighborhoods $\Omega,\Omega'$ of $(1,1,1)$ such that
 $f:\Omega\to \Omega'$ is a diffeomorphism. Since $\left|Df^{-1}\right| <1$ at $(1,1,1)$, we can assume, by making $\Omega'$ smaller,  that $f^{-1}(\Omega')\subset \Omega'$ and
$\left|Df^{-1}\right|\le a<1$ on all of $\Omega'$. Thus $f^{-1}$ is a contraction and hence $x^{(k+1)}=f^{-1}(x^{(k)})$ is a sequence that converges to $(1,1,1)$.

Let $\mathcal O$ be any neighborhood of $h_0$ in the space of homogeneous metrics such that every metric in $\mathcal O$ can be transformed by a gauge transformation  into a metric of the form $g_{(x_1,x_2,x_3,s)}$ with $(x_1/s,x_2/s,x_3/s)\in\Omega'$.
Fix a metric $g_0\in\mathcal O$.
Without loss of generality, assume $g_0$ is diagonal. Then  \eqref{iterate} defines a sequence of metrics with $\Ric g_{k+1}=g_k$ for $k\ge 1$
and $\Ric g_1=cg_0$ for some $c>0$. Furthermore, $\big(x_1^{(k)},x_2^{(k)},x_3^{(k)}\big)$ converges to $(1,1,1)$ and since $c^{(k)}=4n+8-2\big(x_1^{(k+1)}+x_2^{(k+1)}+x_3^{(k+1)}\big)$, it follows that $g_k$ converges to a metric of constant curvature.
\end{proof}

\begin{remark}
The theorem shows that the round metric is stable for the Ricci iteration. In fact, it is also stable for the Ricci flow since it is a local maximum of the scalar curvature functional on the set of metrics of volume~1.
\end{remark}

\appendix
\section{}

If a tensor $T$ on $\mathbb S^{4n+3}$ is invariant under $\Sp(n+1)\Sp(1)$, it is natural to ask whether every $\Sp(n+1)$-invariant solution to $\Ric g=cT$ must also be $\Sp(n+1)\Sp(1)$-invariant. The following theorem shows that this is indeed the case. As we explained in Section~\ref{sec_4n3}, every $\Sp(n+1)$-invariant metric on $\mathbb S^{4n+3}$ can be written as $g_{(x_1,x_2,x_3,s)}$ for some $x_1,x_2,x_3,s>0$. By definition, $g_{(x_1,x_2,x_3,s)}$ is obtained by modifying a round metric $\hat g$ with a left-invariant metric on the 3-dimensional fibers of the Hopf fibration and scaling in the horizontal direction. If $x_1=x_2=x_3=t$, then $g_{(x_1,x_2,x_3,s)}$ coincides with the $\Sp(n+1)\Sp(1)$-invariant metric $g_{t,s}$ obtained by scaling $\hat g$ by $t$ in the direction of the fibers and by $s$ in the perpendicular direction.

\begin{theorem}\label{sp1}
Let $g$ be an $\Sp(n+1)$-invariant metric on $\mathbb S^{4n+3}$ with $\Sp(n+1)\Sp(1)$-invariant Ricci curvature. Then, up to isometry, $g=g_{t,s}$ for some $t,s$.
\end{theorem}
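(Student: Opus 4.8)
The plan is to translate the hypothesis into a condition on the three vertical Ricci components of the four-parameter family and then settle it by an elementary case analysis. As recalled in Section~\ref{sec_4n3} (and in the paragraph preceding the theorem), up to isometry we may assume $g=g_{(x_1,x_2,x_3,s)}$ is diagonal in a basis $\{e_1,e_2,e_3\}$ of the vertical space $V$ which satisfies \eqref{Lie} and is orthonormal for the round metric $\hat g$, so that $\hat g_{|V}=\Id$ in this basis. Write $r_i:=\Ric g(e_i,e_i)=4n\,x_i^2/s^2+2\big(x_i^2-(x_{i+1}-x_{i+2})^2\big)/(x_{i+1}x_{i+2})$, using the curvature formula from Section~\ref{sec_4n3}. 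From that same formula, $\Ric g$ is block-diagonal with respect to $V\oplus H$ (since $\Ric g(e_i,u)=0$), and its horizontal block $\Ric g_{|H}=s\big(4n+8-2(x_1+x_2+x_3)/s\big)\hat g_{|H}$ is automatically a multiple of $\hat g_{|H}$. Because the isotropy group of the larger space acts on $V$ through the irreducible action of $\SO(3)$, an invariant symmetric bilinear form on $V$ must be a multiple of $\Id$; hence the $V$-block $\diag(r_1,r_2,r_3)$ of $\Ric g$ is invariant, and thus $\Ric g$ is $\Sp(n+1)\Sp(1)$-invariant, if and only if $r_1=r_2=r_3$. Rescaling $(x_1,x_2,x_3,s)$ I may take $s=1$, so it remains to show that $r_1=r_2=r_3$ forces $x_1=x_2=x_3$.

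For this I would first record the factorization
\[
r_i-r_j=(x_i-x_j)\,\Phi_{ij},\qquad \Phi_{ij}:=4n(x_i+x_j)+\frac{2(x_i+x_j-x_k)(x_1+x_2+x_3)}{x_1x_2x_3},
\]
valid for distinct $i,j$ with $k$ the remaining index; it follows from the identity $x(x^2-(y-z)^2)-y(y^2-(x-z)^2)=(x-y)(x+y-z)(x+y+z)$. Assume $x_1,x_2,x_3$ are not all equal. If they are pairwise distinct, then $\Phi_{12}=\Phi_{23}=\Phi_{31}=0$; since $n\ge 1$ and $x_1,x_2,x_3>0$, the equation $\Phi_{ij}=0$ forces $x_i+x_j<x_k$, and summing the three resulting inequalities gives $2(x_1+x_2+x_3)<x_1+x_2+x_3$, which is absurd. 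If exactly two coincide, then by the cyclic symmetry of the three equations we may assume $x_1=x_2\ne x_3$; now $r_1=r_2$ holds automatically, while $r_2=r_3$ forces $\Phi_{23}=0$, yet substituting $x_1=x_2$ yields $\Phi_{23}=4n(x_2+x_3)+2(2x_2+x_3)/x_2^2>0$, a contradiction. Hence $x_1=x_2=x_3=:t$, so $g=g_{(t,t,t,s)}=g_{t,s}$.

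The proof is largely mechanical; the one place that calls for care is the reduction carried out in the first paragraph, namely the identification of ``$\Ric g$ is $\Sp(n+1)\Sp(1)$-invariant'' with the three scalar equations $r_1=r_2=r_3$. This rests on the irreducibility of the $\SO(3)$-action on the vertical space (so that invariant bilinear forms there are scalar) together with the fact that the horizontal block of $\Ric g$ and the vanishing of the mixed components are forced by the curvature formulas of Section~\ref{sec_4n3}, independently of $x_1,x_2,x_3$.
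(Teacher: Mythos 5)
Your proof is correct and takes essentially the same route as the paper's: take differences of the three vertical Ricci-component equations, factor out $(x_i-x_j)$, and run a case analysis on how many of the $x_i$ coincide. The one noteworthy difference is the form of the cofactor: you compute $r_i-r_j$ directly and obtain $\Phi_{ij}=4n(x_i+x_j)+2(x_i+x_j-x_k)(x_1+x_2+x_3)/(x_1x_2x_3)$, which does not involve the Ricci eigenvalue $a$, whereas the paper multiplies through by $x_{i+1}x_{i+2}$ first and ends up with the factor $4nx_1x_2x_3+4(x_i+x_j-x_k)+ax_k$. This buys you a cleaner finish in the pairwise-distinct case: each $\Phi_{ij}=0$ immediately forces $x_i+x_j<x_k$ and summing the three is absurd, whereas the paper must first deduce $a=8$ by subtracting two of the vanishing cofactors and then substitute. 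Your handling of the two-equal case ($\Phi_{23}>0$ after substituting $x_1=x_2$) spells out what the paper leaves as "a simple matter." The reduction in your first paragraph, identifying $\Sp(n+1)\Sp(1)$-invariance of $\Ric g$ with $r_1=r_2=r_3$ via irreducibility of the $\Sp(1)$-action on $V$, matches what the paper uses implicitly and is stated correctly.
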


\begin{proof}
Assume $g=g_{(x_1,x_2,x_3,s)}$. By the scale-invariance of the Ricci curvature, it suffices to consider the case where $s=1$. Our goal is to show that $x_1=x_2=x_3$.

Since $\Ric g$ is $\Sp(n+1)\Sp(1)$-invariant, it can be obtained by multiplying $\hat g$ by some $a\in\mathbb R$ in the direction of the fibers and by some $b\in\mathbb R$ in the perpendicular direction. Computing $\Ric g$ as in Section~\ref{sec_4n3}, we find
\begin{align*}
b&=4n+8-2(x_1+x_2+x_3),\notag \\
a&=4nx_i^2+2\frac{x_i^2-(x_{i+1}-x_{i+2})^2}{x_{i+1}x_{i+2}},\qquad i=1,2,3.
\end{align*}
Let us multiply the second line by $x_{i+1}x_{i+2}$ and take differences of the equations for different~$i$. We see that
\begin{equation}\label{takedifference}
\begin{aligned}
0&=(x_1-x_2)(4nx_1x_2x_3+4(x_1+x_2-x_3)+ax_3), \\
0&=(x_2-x_3)(4nx_1x_2x_3+4(x_2+x_3-x_1)+ax_1).
\end{aligned}\tag{A.1}
\end{equation}
First assume that $x_1,x_2,x_3$ are all distinct. Then
\begin{align*}
0&=4nx_1x_2x_3+4(x_1+x_2-x_3)+ax_3,\\
0&=4nx_1x_2x_3+4(x_2+x_3-x_1)+ax_1.
\end{align*}
Taking the difference of these equalities, we see that $8(x_1-x_3)+a(x_3-x_1)=0$ and $a=8$,
so
$$4nx_1x_2x_3+4(x_1+x_2-x_3)+ax_3=4nx_1x_2x_3+4(x_1+x_2+x_3)>0,$$
a contradiction. Thus at least two of $x_1,x_2,x_3$ are identical, and in this case it is a simple matter to conclude from~\eqref{takedifference} that all three must in fact be identical.
\end{proof}

\end{document}